\RequirePackage[table]{xcolor}                                                                                                                                                                                                                                                                                                                                                                                                                                                                                                                                                                                                                                                                                                                                                                                                                                                                                                                                                                                                                                                                                                                                                                                                                                                                                                                                                                                                                                                                                                                                                                                                                                                                                                                                                                                                                                                                                                                                                                                                                                                                                                                                                                                                                                                                                                                                                                                                                                                                                                                                                                                                                                                                                                                                                                                                                                                                                                                           
\documentclass[a4paper,12pt]{amsart}
\usepackage{CJK}
\usepackage{romannum}
\usepackage{amsfonts}
\usepackage{mathtools}
\usepackage{amsmath,amscd}

\usepackage{ifthen}
\usepackage{amsrefs}
\usepackage{mathrsfs}
\usepackage{amsthm}
\usepackage{amssymb}

\usepackage{tikz-cd}
\usepackage{graphicx}
\usepackage{relsize}

\usepackage{MnSymbol}

\usepackage{hyperref}
\usepackage{url}
\usepackage{etoolbox}
\usepackage{rotating}

\usepackage[shortlabels]{enumitem}
\usepackage[paper=a4paper,left=20mm,right=20mm,top=25mm,bottom=30mm]{geometry}

\usepackage{tikz}
\usetikzlibrary{backgrounds}
\usepackage{tkz-euclide}
\usepackage{xcolor}
\usetikzlibrary{trees,snakes,shapes.geometric}
\usepackage[outline]{contour}
\contourlength{1.5pt}
\usetikzlibrary{positioning}
\usetikzlibrary{%
  matrix,%
  calc,%
  arrows%
}

\setlist[enumerate]{topsep=0em, itemsep= -0em, parsep = 0 em, label=$(\alph*)$}
\let\emptyset\varnothing
\nocite{*}

\newcommand{\cA}{\mathcal{A}}

\newcommand{\cC}{\mathcal{C}}
\newcommand{\cD}{\mathcal{D}}

\newcommand{\cK}{\mathcal{K}}
\newcommand{\cM}{\mathcal{M}}
\newcommand{\cN}{\mathcal{N}}

\newcommand{\cW}{\mathcal{W}}

\newcommand{\cR}{\mathcal{R}}

\DeclareMathOperator{\rad}{rad}
\DeclareMathOperator{\rep}{rep}

\DeclareMathOperator{\coker}{coker}

\DeclareMathOperator{\modd}{mod}
\DeclareMathOperator{\supp}{supp}

\DeclareMathOperator{\im}{im}

\DeclareMathOperator{\Irr}{Irr}

\DeclareMathOperator{\EIP}{EIP}

\DeclareMathOperator{\Inj}{Inj}
\DeclareMathOperator{\Sur}{Surj}

\DeclareMathOperator{\EKP}{EKP}

\let\emptyset\varnothing
\newtheorem{proposition}{Proposition}[section]
\newtheorem{Theorem}[proposition]{Theorem}
\newtheorem{Lemma}[proposition]{Lemma}

\newtheorem{corollary}[proposition]{Corollary}

\newtheorem{example}{Example}

\newenvironment{Remark}[1][Remark.]{\begin{trivlist}
\item[\hskip \labelsep {\bfseries #1}]}{\end{trivlist}}

\newenvironment{Definition}[1][Definition.]{\begin{trivlist}
\item[\hskip \labelsep {\bfseries #1}]}{\end{trivlist}}

\author{Jie Liu}
\address{School of Mathematics and Statistics, Guangdong University of Technology,
Guangzhou 510520, People’s Republic of China}

\email{jie@gdut.edu.cn}

\address{Shenzhen International Center for Mathematics,   Southern University of Science and Technology, shenzhen 518055, People’s Republic of China }

\title{Widths of regular components for n-regular tree $T(n)$} 

\begin{document}

\rmfamily

%%%%%%%%%%%%%%%%%%%%%% TITELSEITE %%%%%%%%%%%%%%%%%%%%%%%%%%%%%%%%

\pagenumbering{arabic}
\thispagestyle{empty}

\begin{abstract}
Let $(T(n),\Omega)$ be the covering of the generalized Kronecker quiver $K(n)$, where $\Omega$ is a bipartite orientation. Given a regular Auslander--Reiten  component   $\cD$  of  $\modd(T(n),\Omega)$,  we introduce two invariants: the width $\cW(\cD)$ and the number of flow modules $b(\cD)$.  We show that   $\cW(\cD)\geq \frac{b(\cD)+1}{2}$. In particular, we get $\{\cW(\cD)| \cD \text{ is a regular component} \}=\mathbb{N}$.

\end{abstract}

\maketitle

\providecommand{\keywords}[1]
{
  \small	
  \textbf{\text{Keywords:}} #1
}
\keywords{the covering; number of flow modules;  width}

2020  \textit{Mathematics subject classification:} 	16G20; 	16G70

\section{introduction}

\vspace{.3cm}

Let $k$ be an algebraically closed  field.   The generalized  Kronecker quiver $K(n)$ is just the quiver with two vertices $1$, $2$, and $n$ arrows $\gamma_1, \cdots, \gamma_n:$ $1\rightarrow 2$. It is popular to study the representations of $K(n)$ when studying finite groups, moduli spaces, quiver Grassmannians and many other aspects of mathematics (cf.\cite{Julia}, \cite{Luis}, \cite{Claus4}). Let $\rep_k(K(n))$ denote the category of the finite-dimensional representations of $K(n)$  and let $\cK_n=kK(n)$, the \textit{path algebra} of $K(n)$ (cf.\ref{Quiver}).  If we use $\modd\cK_n$ to denote the category of finite-dimensional modules over $\cK_n$,  then there  exists an equivalence between the categories  $\rep_k(K(n))$ and  $\modd \cK_n$.   We will  identify these two categories frequently.

  When $n\leq 2$, it is known how to classify  the indecomposable modules in $\modd \cK_n$ \cite[\Romannum{19}.3]{Assem2}. When $n\geq 3$,  however, it is   hopeless to classify all the indecomposable modules since the representation type is wild in this case (cf.\cite[\Romannum{18}]{Assem2}, \cite[1.3]{Kerner}). Hence it is desirable to find invariants in the category $\modd \cK_n$. J. Worch introduced the invariant \textit{width} $\cW(\cC)$ for a regular component $\cC$ of $\modd \cK_n$ in 2013,  and she showed that $\cW(\cC)\geq 0$ \cite[Section 3]{Julia}. C. M. Ringel found two invariants: \textit{the smallest radius} and \textit{the number of flow modules} for the  covering quiver of $K(n)$ in 2018 (cf. \cite{Claus}, \cite{Claus3}, \cite{Bongartz}). Then it is natural to ask what the width of a regular component $\cD$ of the covering is.

From now on, we let $n\geq 3$ for $K(n)$. Let $M\in \modd \cK_n$. Then we have a linear operator $x^M_{\alpha}=\alpha_1M(\gamma_1)+\alpha_2M(\gamma_2)+\cdots+\alpha_nM(\gamma_n): M_1\rightarrow M_2$ for every  $\alpha=(\alpha_1,\cdots,\alpha_n)\in k^n\setminus\{0\}$. Thus we  can define two full  subcategories of $\modd\cK_n$ with the equal kernels property and with the equal images property (cf.\cite[Definition 2.1]{Julia}): $\EKP_n:=\{M\in \modd\cK_n\mid \forall \alpha\in k^n\setminus\{0\}: x^M_{\alpha} \text{ is injective} \}$ and $\EIP_n:=\{M\in \modd\cK_n\mid \forall \alpha\in k^n\setminus\{0\}: x^M_{\alpha} \text{ is surjective} \}.$
One shows  that  $\EIP_n\cap \EKP_n=(0)$, and   the regular Auslander--Reiten (AR) components of the category $\modd\cK_n$  are  of the form $\mathbb{Z}A_{\infty}$ (cf.\cite{Claus1}). Let $M\in \modd \cK_n$ be an indecomposable module and let $\tau$ be the  Auslander--Reiten translation \cite[\Romannum{4}.2.3]{Assem}. Then a  module $M$ is said to be \textit{regular} if $\tau^m(M)\neq 0$ for any $m\in \mathbb{Z}$. Suppose that $M\in  \EKP_n$ is regular. Then there exists some positive integer $m$ such that $\tau^m M\in \EIP_n$ (cf.\cite[Section 3]{Julia}).  Hence it suggests us to find an invariant connecting $\EIP_n\cap \cC$ and $\EKP_n\cap \cC$ for every regular AR component $\cC$ of  $\modd\cK_n$.  Then we can define a \textit{width}  $\cW(\cC)(\geq 0)$ of $\cC$ \cite[Section 3]{Julia}.  In this note, we  investigate the width  from another aspect using covering theory.

 Let  $(T(n),\Omega)$ be the universal covering of $K(n)$ \cite[Section 7]{Daniel}, i.e., $T(n)$ is  an $n$-regular tree  with a bipartite orientation $\Omega$ (see \ref{Quiver}). Let $\modd(T(n),\Omega)$ be the category of finite-dimensional representations of $(T(n),\Omega)$.  Then  $\modd(T(n),\Omega)$ also admits AR--sequences (cf.\cite[2.2]{Bongartz}).  Similarly, we have widths  for the regular components $\cD$  of $\modd(T(n),\Omega)$. C. M. Ringel defined an invariant $b(\cD)$ (the number of flow modules) for the components $\cD$  in 2018 (cf.\cite{Claus}). We study the components $\cD$ and show that $\cW(\cD)> 0$.

\section{preliminaries}
Throughout, $k$ will  denote an algebraically closed field.

\subsection{Trees}
A \textit{graph}   $G=(G_0,G_1)$ consists of two sets: $G_0$ (the set of \textit{vertices}) and $G_1$ (the set of \textit{edges}). Two vertices $a,b\in G_0$  are called  \textit{neighbours}  if $\{a,b\}$ is an edge in $G_1$.    For a graph $G$, by  an \textit{orientation} we mean a map $\Omega$  $:G_1 \rightarrow G_0\times G_0$ such that $\Omega(\{a, a'\})$ is either ($a, a'$) or ($a', a$). We call $(G, \Omega)$  an \textit{oriented graph}. We  write $a\rightarrow a'$ if $\Omega(\{a, a'\})=(a, a')$ and call $a$ the \textit{start}, $a'$ the \textit{target}, $a\rightarrow a'$ the \textit{arrow}.   Meanwhile we call $a$   a \textit{sink} (or a \textit{source}) if it is not a start (or the target, respectively) of any arrow. If any vertex is a sink or a source, then the orientation  $\Omega$ will be called \textit{bipartite}. A \textit{subgraph}    of a  graph $G=(G_0,G_1)$ is a graph  $G'=(G'_0,G'_1)$ such that $G'_0\subseteq G_0, G'_1\subseteq G_1$. A  \textit{walk} of length $t\geq 0$ with \textit{start} $a_0$ and \textit{target} $a_t$ in a graph $G$ is a finite sequence 
\begin{center}
$l_t=(a_0|\{a_0,a_1\},\{a_1,a_2\},\cdots,\{a_{t-1},a_t\}|a_t)$
\end{center}
such that   $a_{i-1}$ and $a_i$ are neighbours for $1\leq i\leq t$, and $a_{j-1}\neq a_{j+1}$ for $1\leq j < t$. We sometimes use $(a_0,a_1,\cdots, a_t)$ to denote the walk $l_t$ if there only exists one walk connecting $a_0$ and $a_t$. The graph $G$  is called \textit{finite} if $G_0$ and $G_1$ are finite sets, and  $G$ is said to be \textit{connected} if for every two vertices $a,b\in G_0$, there always exists a walk that connects them. A   graph $G$ is said to be a \textit{tree} if it is connected and there only exist  walks of length $0$ that connect a vertex  with itself. Moreover, for any two different vertices $a,b$ in a tree $G$ we can only find one walk that  connects them.    Suppose that $G$ is a finite tree. Then we call the walks of the maximal possible length  the \textit{diameter walks} of $G$, and we use $d(G)$ to denote their length. Let $l_t=(a_0,a_1,\cdots,a_t)$ be a  walk in $G$. If  $t=2r$,  then we  call $a_r$ or $\{a_r\}$ the \textit{center} of $l_t$ and $r$ its \textit{radius}. If $t=2r+1$, then we call $\{a_r, a_{r+1}\}$ the \textit{center} of $l_t$ and  $r$ its \textit{radius}. In fact,  all diameter walks of a finite tree $G$ have the same center \cite[2.1]{Claus}. Hence we take  the center and the radius of diameter walks of a finite tree $G$ as the center and the radius of the $G$ itself and we use $C(G)$ and $r(G)$ to denote them, respectively.

\subsection{Quivers}\label{Quiver}

A \textit{quiver} is a quadruple  $Q=(Q_0, Q_1, s, t)$, where $s,t: Q_1\rightarrow Q_0$. The elements in $Q_0$ are called \textit{vertices} or \textit{points}, and the elements in $Q_1$ are called \textit{arrows}, respectively. For every arrow $\alpha\in  Q_1$, we call the vertex $s(\alpha)$  its \textit{start}, and the vertex $t(\alpha)$ its \textit{target}.  Note that  an  oriented graph $(G,\Omega)$ can be seen as a quiver without loops and multiple arrows.

Let $Q$ be a quiver, and let  $x\in Q_0$. We define two sets
\begin{center}
$x^+:=\{y\in Q_0\mid \exists \alpha \in Q_1: t(\alpha)=y \text{ and } s(\alpha)=x \}$,

$x^-:=\{y\in Q_0\mid \exists \alpha \in Q_1: s(\alpha)=y \text{ and } t(\alpha)=x\}.$

\end{center}
The \textit{neighbourhood} $\mathcal{N}(x)$ of $x$ is $x^+\cup x^-.$ We say that $x$  is a  \textit{leaf},  provided $x$ has at most one neighbour. A sequence $(a\mid \alpha_1, \alpha_2,\cdots,\alpha_r\mid b )$ in  $Q$  is called a \textit{path} with start $a$ and target $b$ if $s(\alpha_1)=a,  t(\alpha_r)=b$, and $t(\alpha_k)=s(\alpha_{k+1})$ for each $1\leq k<r$. Moreover, we  take each vertex $x\in Q_0$ as a \textit{trivial path} of length $0$. The quiver $Q$  is said to be \textit{locally bounded} if for every $x\in Q_0$ the neighbourhood $\cN(x)$ is finite and there exists a number $m_x\in \mathbb{N}$ such that  each path in $Q$ which starts or ends in $x$ is of length $\leq m_x$. In this note, we always assume that it is locally bounded when we take about a quiver $Q$.

Let $\delta_{a,b}$ be the Kronecker delta, and let $l_1=(a_0| \alpha_1,\cdots, \alpha_p|a_p),l_2=(b_0|\beta_1,\cdots, \beta_q |b_q)$ be two paths.  We define the product of the paths $l_1$ and $l_2$ by $l_1l_2=\delta_{a_p,b_0}(a_0| \alpha_1,\cdots,\alpha_p,\beta_1,\cdots,\beta_q|b_q)$.  Then we can get a $k$-algebra  $kQ$ of the quiver $Q$  consisting of $k$-linear span of all paths of length $l \geq 0$ (cf. \cite[\Romannum{2}.1]{Assem}). We call $kQ$  the \textit{path algebra} of $Q$. When $Q_0$ is finite, the $kQ$ is an associative and finite-dimensional   $k$-algebra. A finite-dimensional representation $M=((M_x)_{x\in Q_0}, (M(\alpha))_{\alpha\in Q_1})$ of $Q$ consists of vector spaces $M_{x}$ and $k$-linear maps $M(\alpha): M_{s(\alpha)} \rightarrow M_{t(\alpha)}$ such that $\dim_{k}M:=\sum _{x\in Q_0}\dim_k M_{x}$ is finite. A \textit{morphism} $f: M\rightarrow N$ between two representations is a collection of $k$-linear maps $(f_z)_{z\in Q_0}$, such that for each arrow $\alpha: x\rightarrow y$, we have $ f_y \circ M(\alpha)= N(\alpha)\circ f_x$. We often use $\rep_k(Q)$ to denote the category of finite-dimensional representations of $Q$ over $k$. If we use $\modd kQ$ to denote the category of finite-dimensional left modules of $kQ$, then the categories $\modd kQ$ and $\rep_k(Q)$ are equivalent (cf. \cite[\Romannum{3} 1.6]{Assem}).

We use $(T(n),\Omega)$ to denote the universal covering of $K(n)$ with a  bipartite orientation $\Omega$ \cite[Section 7]{Daniel}. Moreover, we use $(T(n),\Omega)_0$ and $(T(n),\Omega)_1$ to denote the sets of vertices and arrows in the quiver $(T(n),\Omega)$, respectively. Then there exists a push-down functor $\pi_\lambda:\rep_k(T(n),\Omega)\rightarrow\rep_k(K(n))$, and $\pi_\lambda$ is exact (cf. \cite{Daniel}). We follow the same notations  as in \cite[Section 7]{Daniel} and give an example in the following.
  
\begin{example}\label{EX1}

Let $M\in\modd(T(3),\Omega)$ be the following 
\begin{center}

\begin{tikzpicture}
\node (00) at (0,0) {$0$};
\node (10) at (1,0) {$k$};
\node (20) at (2,0) {$k$};
\node (30) at (3,0) {$k$};
\node (40) at (4,0) {$k$};
\node (50) at (5,0) {$k$};
\node (60) at (6,0) {$0$,};
\node (1-1) at (1,-1) {$0$};
\node (2-1) at (2,-1) {$k$};
\node (3-1) at (3,-1) {$0$};
\node (4-1) at (4,-1) {$k$};
\node (5-1) at (5,-1) {$0$};

\path [->] (00) edge (10)
           (20) edge node[above]{$\gamma^1_1$}(10) 
           (1-1) edge (10)
          (20) edge node[above]{$\gamma^1_3$} (30)
            (40) edge node[above]{$\gamma^2_1$} (30)               
             (20) edge node[midway, right]{$\gamma^1_2$} (2-1)            
                (40) edge node[above]{$\gamma^2_3$} (50)
                (40) edge node[midway, right]{$\gamma^2_2$} (4-1)
                (60) edge (50)
                (5-1) edge (50)
              (3-1) edge (30)
                      
                           ;
\end{tikzpicture}
\end{center}
where every $M(\gamma^i_j)$ is the identity map and $\pi(\gamma^i_j)=\gamma_j,i,j\in\{1,2,3\}$. In fact,    $M$ is indecomposable \cite[Proposition 1]{Claus5}.  Moreover, we have $\pi_\lambda(M)=N$, where

 \[
N=\begin{tikzcd}
    k^2 
    \arrow[r,  "\gamma_2"]
    \arrow[r, bend left,        "\gamma_1"]
    \arrow[r, bend right, swap, "\gamma_3"]
    &
     k^5,
\end{tikzcd}
\]

 with
\begin{center}
$N(\gamma_1)=\begin{bmatrix}
  1  & 0 \\
  0 & 0\\
  0 & 1\\
  0 &0 \\
  0 & 0
 \end{bmatrix}, N(\gamma_2)=\begin{bmatrix}
 0 & 0\\
 1 & 0 \\
 0 & 0\\
0 & 1\\
0 & 0
 
\end{bmatrix}, N(\gamma_3)=\begin{bmatrix}
0 & 0\\
0& 0\\
1& 0\\
0& 0 \\
0 & 1
\end{bmatrix}.  $ 
  
\end{center}
\end{example}
 \vspace{0.4cm}
 
  Let $\sigma_x$ be the reflection functor of a sink  $x\in (T(n),\Omega)_0$ (cf.\cite[\Romannum{7}.5]{Assem}).  For any two sinks $x,y\in (T(n),\Omega)_0$, we have $\sigma_x \sigma_y=\sigma_y\sigma_x$.  Then we can define a reflection functor $\sigma:$ rep$_k(T(n),\Omega)\rightarrow\rep_k(T(n),\sigma\Omega)$ at all sinks, where $\sigma \Omega$ is the opposite orientation. The functor $\sigma$ is independent of the order used in the composition and acts on the finite-dimensional representations, so that for any non-trivial representation $M\in \rep_k(T(n),\Omega)$ there exist sinks $x_1,x_2,\cdots,x_M$ such that $\sigma(M)=\sigma_{x_1}\sigma_{x_2}\cdots\sigma_{x_M}(M)$. Hence the functor $\sigma$ is  well-defined. We sometimes call the functor $\sigma$ the \textit{shift functor}.  Similarly, we have a reflection functor $\sigma^-$ at  all sources of $(T(n),\Omega)_0$.

   We call the representations in $\rep_k(T(n),\Omega)$  the \textit{graded Kronecker modules} (or \textit{graded modules}, or simply \textit{modules}), and we often use  $\modd(T(n),\Omega)$ to denote the category of  those modules. 
\begin{Definition}
Let $M\in\modd(T(n),\Omega)$ be an indecomposable module. We say that  $M$ is \textit{regular}, provided $\sigma^t(M)\neq 0$ for any $t\in \mathbb{Z}$.

\end{Definition} 
In fact, an indecomposable module $M\in \modd(T(n),\Omega)$ is regular if and only if $\pi_\lambda{(M)}$ is regular (cf.\cite[\Romannum{8}.2.1, \Romannum{8}.2.14]{Assem},  \cite[Corollary 7.2]{Daniel}). Moreover, it can be checked by looking at the dimension vector of $\pi_\lambda(M)$ (cf.\cite{Bo}, \cite{Zhang}). Hence one can show that $M$ is regular since $\pi_\lambda(M)=N$ is regular in Example \ref{EX1}.

The functors $\sigma,\sigma^-$ send regular indecomposable modules to regular indecomposable modules \cite[Lemma 3.1.2]{Jie}. Furthermore, the functors  $\sigma$  and $ \sigma^-$  induce quasi-inverse    equivalences of the $k$-linear full subcategories of $\modd (T(n),\Omega)$ and   $\modd(T(n),\sigma\Omega)$ consisting of  regular modules \cite[Lemma 3.2.7]{Jie}.   Let $M\in\modd(T(n),\Omega)$ be a regular indecomposable module. Then $\sigma\sigma^-(M)\cong \sigma^-\sigma(M)\cong M$ and  $\sigma^2 M\cong\tau M$, where $\tau$ is the Auslander--Reiten translation \cite[2.6]{Claus}.  Let $D_{T(n)}: \modd(T(n),\Omega)\rightarrow\modd(T(n),\sigma\Omega)$ be the duality  as in \cite[7.2]{Daniel}.  Then  we have $D_{T(n)}\circ \sigma (M)\cong \sigma^- \circ D_{T(n)}(M)$, $D_{T(n)}\circ \sigma^- (M)\cong \sigma\circ D_{T(n)}(M)$ \cite[Lemma 3.2.8]{Jie}.

\subsection{Graded Kronecker modules}

 Let $ \emptyset\neq S\subseteq (T(n),\Omega)_0$ be a set of vertices. We use $T(S)$ to denote  the unique minimal tree in $(T(n),\Omega)$ containing $S$.  Let $x\in (T(n),\Omega)_0$,  and let $M\in\modd(T(n),\Omega)$. Define $\supp(M):=\{y\in  (T(n),\Omega)_0\mid M_y\neq (0)\}$, and call it  the \textit{support} of $M$. Let $T(M)=T(\supp(M))$. Then  the vertex $x$ is said to be a \textit{leaf} of $M$,  provided $x$ is a leaf of $T(M)$.  Suppose that $M$ is indecomposable.  Then $T(M)$ is a finite tree, we can  define $d(M)=d(T(M))$,  $C(M)=C(T(M))$ and $r(M)=r(T(M))$, and we call them the  \textit{diameter}, the \textit{center},  and the \textit{radius}  of $M$, respectively  \cite[2.4]{Claus}.   Module  $M$ is said to be  a \textit{sink} (or \textit{source})  module,        provided the diameter walks of $T(M)$ start and end in sinks (or sources), and  $M$ is said to be a \textit{flow} module when $d(M)$ is odd. We can actually find    some $i\in \mathbb{Z}$ such that $M_0=\sigma^{-i}M$ is a sink module with the smallest possible radius  \cite[Theorem 1]{Claus}. We define the \textit{index} $\iota(M):=t$ when $M=\sigma^{t} M_0$ for some $t\in \mathbb{Z}$. 
 
 Let $M\in \modd(T(n),\Omega)$ be a sink (or source) module with center $c$ and radius $r\geq 1$. Then $M$ is said to be  \textit{complete} if for any walk $(x(0),x(1),\cdots,x(r))$
 in $ (T(n),\Omega)$ with $x(r)=c$ such that the vertex  $x(i)\in \supp(M)$ for all $i\in\{1,\cdots,r\}$, we have $\dim_k M_{x(0)}=\dim_k M_{x(1)}$. When $M$ is a flow module with radius $r\geq 1$, we say that $M$ is \textit{complete} if for any walk   $(x(0),x(1),\cdots,x(r))$ in $ (T(n),\Omega)_0$  with $x(r)\in C(M)$,  $x(r-1)\nin C(M)$ and $x(i)\in\supp (M)$ for all $i\in \{1,\cdots,r\}$, we have $\dim_k M_{x(0)}=\dim_k M_{x(1)}$. Otherwise, we call an indecomposable module $M$ \textit{incomplete} \cite[2.5]{Claus}.

\subsection{Regular components}

We give a short introduction about the Auslander--Reiten quivers for the finite-dimensional $k$-algebras.  Readers can refer to \cite{Bongartz} and \cite[\Romannum{4}.4]{Assem}  for locally bounded categories.

 Let $\cA$ be a finite-dimensional $k$-algebra, and let $\modd \cA$ be the category of all finite-dimensional left modules of $\cA$.  Let $X, Y\in \modd \mathcal{A}$ be indecomposable modules. We say that a homomorphism $f: X\rightarrow Y$ is  \textit{irreducible},   provided $f$ is neither a split monomorphism nor a split epimorphism, and if $f=f_1\circ f_2$, then either $f_1$ is a split epimorphism or $f_2$ is a split monomorphism.
 Let $\rad_{\cA}$ be the radical of  $\modd \cA$. Then we  define the set
\begin{center}
$\Irr(X,Y):=\rad_{\cA}(X,Y)/\rad^2_{\cA}(X,Y)$
\end{center}
 and call it the \textit{space of irreducible morphisms} \cite[\Romannum{4}.1.6]{Assem}. In fact, a homomorphism $f: X\rightarrow Y$ is  irreducible if and only if $f\in \rad_{\cA}(X,Y)\backslash \rad^2_{\cA}(X,Y)$.

\begin{Definition}
 The \textit{Auslander--Reiten quiver} (\textit{translation quiver}) $\Gamma(\mathcal{A})$ of $\mathcal{A}$  is defined as follows:
 
 \begin{enumerate}
 \item The points $\Gamma(\mathcal{A})_0$ correspond to the isomorphism classes $[X]$ of indecomposable modules $X$ in $\modd\mathcal{A}$.
 
 \item Let $[X],[Y]$ be two points in  $\Gamma(\mathcal{A})_0$. Then there are $\dim_k\Irr(X,Y)$ arrows from $[X]$ to $[Y]$ in $\Gamma(\mathcal{A})_1$.

 \end{enumerate}

\end{Definition}
Let $X, Y\in\modd \cA$ be two indecomposable modules. We say that $X$ is a \textit{predecessor} of $Y$, provided there exists a  path from $[X]$ to $[Y]$ in $\Gamma(\cA)$, i.e. a
chain of irreducible maps from $X$ to $Y$. We say that $X$ is a \textit{sucessor} of $Y$, provided there exists a  path from $[Y]$ to $[X]$ in $\Gamma(\cA)$, i.e. a chain of irreducible maps from $Y$ to $X$. We use the \textit{cones} $(X\rightarrow)$ to denote the set of all sucessors of $X$ and $(\rightarrow Y)$ all predecessors of $Y$, respectively. 

Let $X$ be a    non-projective indecomposable module (or let $Y$ be a  non-injective indecomposable module). Then there exists  a uniquely  determined short exact sequence, called \textit{Auslander--Reiten sequence}, or simply \textit{AR--sequence} (or \textit{almost split sequence})

\begin{center}
$0\rightarrow Y\xrightarrow{f} \bigoplus^t_{i=1} M^{n_i}_i \xrightarrow{g} X\rightarrow 0$,
\end{center}
where $Y$ ($X$, respectively) is indecomposable, modules $M_i$ are pairwise non--isomorphic and indecomposable. Suppose that $f=\begin{bmatrix}
f_1 \\
\vdots\\
f_t
\end{bmatrix}, g=\begin{bmatrix}
g_1,\cdots,g_t
\end{bmatrix},$ where $f_i=\begin{bmatrix}
f_{i1} \\
\vdots\\
f_{in_i}
\end{bmatrix},  g_i=\begin{bmatrix}
g_{i1},\cdots,g_{in_i}
\end{bmatrix} $. Then the maps $f_{i1}, f_{i2}, \cdots, f_{i{n_i}}: Y\rightarrow M_i$  and $g_{i1}, \cdots, g_{i{n_i}}: M_i\rightarrow X$ correspond to  bases of  $\Irr(Y,M_i)$ and $\Irr(M_i, X)$, respectively. We write $Y=\tau X$ (or $X=\tau^-Y$) and we denote this in $\Gamma(\mathcal{A})$ by $[Y]\dashleftarrow [X]$. Note that we sometimes don't distinguish between $X$ (or $Y$) and its isomorphism class $[X]$ (or $[Y]$) in $\Gamma(\cA)$. Moreover, we say that an indecomposable module $M\in \modd\cA$ \textit{preprojective} (\textit{preinjective}) if there exists $m\in \mathbb{N}_0$ such that $\tau^m M$ ($\tau^{-m} M$, respectively) is projective (injective, respectively), otherwise,  $M$ is said to be \textit{regular}.

We call a connected component $\cC$ of $\Gamma(\cA)$ \textit{regular} if it consists of regular modules. It is well-known that the regular components of $\modd \mathcal{K}_n$ are  of the type $\mathbb{Z}A_{\infty}$ (cf.\cite{Claus1}), and they are of the following form (see FIGURE \ref{Fig:RCGM}).
\begin{figure}[!h]

\begin{center}

\begin{tikzpicture}[very thick,scale=0.7]

                    [every node/.style={fill, circle, inner sep = 1pt}]

%%%%%%%%%%%%%%%%%%%%%%%%%%%%%% Parameter %%%%%%%%%%%%%%%%%%%%%%%%%%%%%%%%%%%

\def \n {8} % #Knoten Reihe  - 1

\def \m {4} % #Knoten Spalte - 1

\def \translation {1} % 1 Für Translation

\def \ab {0.15} % Abstand Pfeil und Knoten

\def \Pab {0.6} % Halber Abstand Horizontal

\def \lcone {1} % 1 für linken Kegel

\def \ldist {3} % Anzahl der quasi-einfachen die eingeschlossen werden - 1

\def \lcolor {red} % white für keine Farbe

\def \rcone {1} % 1 für rechten Kegel

\def \rdist {3} % Anzahl der quasi-einfachen die eingeschlossen werden - 1

\def \rcolor {red} %  white für keine Farbe

\def \llcone {0} % 1 für einen zweiten linken Kegel rechts von lcone

\def \lldist {4} % Anzahl der quasi-einfachen die eingeschlossen werden - 1

\def \rrcone {0} %1 für einen zweiten rechten Kegel links von rcone

\def \rrdist {4} % Anzahl der quasi-einfachen die eingeschlossen werden - 1

%%%%%%%%%%%%%%%%%%%%%%%%%%%% Quellcode %%%%%%%%%%%%%%%%%%%%%%%%%%%%%%%%%%%%%%

\foreach \a in {0,...,\n}{

\foreach \b in {0,...,\m}{

    \ifthenelse{\a = \n \and \b < \m}{

   %\node[color=black] ({\a,\b}) at ({\a*2*\Pab+\Pab},{\b*2*\Pab+\Pab}) {$\circ$};

   \node[color=black] ({\a,\b,5})at ({\a*2*\Pab},{\b*2*\Pab}) {$\circ$};

     }

     {

      \ifthenelse{\b = \m \and \a < \n}{

      \node[color=black] ({\a,\b}) at ({\a*2*\Pab+\Pab},{\b*2*\Pab+\Pab}) {$\circ$};

      \node[color=black] ({\a,\b,5})at ({\a*2*\Pab},{\b*2*\Pab}) {$\circ$};

      }

      {

      \ifthenelse{\b = \m \and \a = \n}

     {\node[color=black] ({\a,\b,5})at ({\a*2*\Pab},{\b*2*\Pab}) {$\circ$};}

    {\node[color=black] ({\a,\b}) at ({\a*2*\Pab+\Pab},{\b*2*\Pab+\Pab}) {$\circ$};

    \node[color=black] ({\a,\b,5})at ({\a*2*\Pab},{\b*2*\Pab}) {$\circ$};

      }

      }

      }

    }

    }

\foreach \s in {0,...,\n}{

\foreach \t in {0,...,\m}{  

 \ifthenelse{\t = \m \and \s < \n}{

    \draw[->] (\s*2*\Pab+\ab,\t*2*\Pab+\ab) to (\s*2*\Pab+\Pab-\ab,\t*2*\Pab+\Pab-\ab); 

    \draw[->] (\s*2*\Pab+\Pab+\ab,\t*2*\Pab+\Pab-\ab) to (\s*2*\Pab+2*\Pab-\ab,\t*2*\Pab+\ab); 

 }{

   \ifthenelse{\s = \n \and \t < \m}{

    %\draw[->] (\s*2*\Pab+\Pab+\ab,\t*2*\Pab+\Pab+\ab) to (\s*2*\Pab+2*\Pab-\ab,\t*2*\Pab+2*\Pab-\ab);

   %\draw[->] (\s*2*\Pab+\ab,\t*2*\Pab+2*\Pab-\ab) to (\s*2*\Pab+\Pab-\ab,\t*2*\Pab+\Pab+\ab);  

   %\draw[->] (\s*2*\Pab+\ab,\t*2*\Pab+\ab) to (\s*2*\Pab+\Pab-\ab,\t*2*\Pab+\Pab-\ab); 

   }

  {

  \ifthenelse{\s = \n \and \t = \m}{

    }{

   \draw[->] (\s*2*\Pab+\ab,\t*2*\Pab+\ab) to (\s*2*\Pab+\Pab-\ab,\t*2*\Pab+\Pab-\ab); 

   \draw[->] (\s*2*\Pab+\Pab+\ab,\t*2*\Pab+\Pab+\ab) to (\s*2*\Pab+2*\Pab-\ab,\t*2*\Pab+2*\Pab-\ab);

   \draw[->] (\s*2*\Pab+\ab,\t*2*\Pab+2*\Pab-\ab) to (\s*2*\Pab+\Pab-\ab,\t*2*\Pab+\Pab+\ab); 

   \draw[->] (\s*2*\Pab+\Pab+\ab,\t*2*\Pab+\Pab-\ab) to (\s*2*\Pab+2*\Pab-\ab,\t*2*\Pab+\ab);    

   }

   }

   }

    }

    }

\draw[->] (\n*2*\Pab+\ab,\m*\Pab+2*\Pab+\Pab+\Pab+\ab) to (\n*2*\Pab+\Pab-\ab,\m*\Pab+2*\Pab+\Pab+\Pab+\Pab-\ab);

\ifthenelse{\isodd{\m}}

%% IF

 { 

  \node[color=black] (Dots1) at (0,\m*\Pab+2*\Pab+\Pab) {$\cdots$};

  \node[color=black] (Dots2) at (1+\n*2*\Pab,\m*\Pab+2*\Pab+\Pab) {$\cdots$};

   \ifthenelse{\isodd{\n}}{

  \node[color=black] (Dots3) at (-0.2*\n*2*\Pab,2*\m*\Pab+3*\Pab) {$\vdots$};}

  {\node[color=black] (Dots4) at (0.75*\n*2*\Pab,2*\m*\Pab+2*\Pab) {$\vdots$};} 

  }

%% Else

  {

  \node[color=black] (Dots1) at (0,\m*\Pab+\Pab) {$\cdots$};

  \node[color=black] (Dots2) at (1+\n*2*\Pab,\m*\Pab+\Pab) {$\cdots$};

  \ifthenelse{\isodd{\n}}{

  \node[color=black] (Dots3) at (0*\n*2*\Pab,2*\m*\Pab+3*\Pab) {$\vdots$};}

  {\node[color=black] (Dots4) at (0.25*\n*2*\Pab,2*\m*\Pab+2*\Pab) {$\vdots$};}

  }

\ifthenelse{\translation = 1}{

   \foreach \s in {0,...,\n}{

   \foreach \t in {0,...,\m}{ 

   \ifthenelse{\s = 0}{}{

      \ifthenelse{\s = \n}{\draw[->,dotted,thin] (\s*2*\Pab-\ab,\t*2*\Pab) to (\s*2*\Pab-2*\Pab+\ab,\t*2*\Pab); }{

   \draw[->,dotted,thin] (\s*2*\Pab-\ab,\t*2*\Pab) to (\s*2*\Pab-2*\Pab+\ab,\t*2*\Pab); 

   \draw[->,dotted,thin] (\s*2*\Pab-\ab+\Pab,\t*2*\Pab+\Pab) to (\s*2*\Pab-2*\Pab+\Pab+\ab,\t*2*\Pab+\Pab); 

\draw[->,dotted,thin] (\n*2*\Pab-\ab+\Pab,\t*2*\Pab+\Pab) to (\n*2*\Pab-2*\Pab+\Pab+\ab,\t*2*\Pab+\Pab); 

   }

   }}

}}

{}  %ELSE

                                                                                                                                                                                             \end{tikzpicture}

\end{center}
\caption{Regular component $\mathbb{Z}A_{\infty}.$}
\label{Fig:RCGM}
\end{figure}
A module $M$  is called \textit{quasi-simple} if it is located in the bottom row of  a $\mathbb{Z}A_{\infty}$-component.  Let  $M$  be a module in a regular $\mathbb{Z}A_{\infty}$-component  $\cC$. Then there exists a  quasi-simple module $X\in \cC$ (or $Y\in \cC$)  such that  we can find a   chain of irreducible monomorphisms $X=X_1\rightarrow \cdots \rightarrow X_{s-1}\rightarrow X_s=M$ (or irreducible epimorphisms $M=Y_s\rightarrow Y_{s-1}\rightarrow \cdots \rightarrow Y_1=Y$). We call $s$ the \textit{quasi-length} of $M$ and the module $X$ (or $Y$) \textit{quasi-socle} (or \textit{quasi-top}) of $M$.  It can be shown that  $M$ is uniquely determined by its quasi-length and quasi-socle (or  quasi-top), whence we can define $ql(M):= s=$ the quasi-length of $M$.

Given a regular $\mathbb{Z}A_{\infty}$-component $\cC$,  there are uniquely determined quasi-simple modules $M_{\cC}$ and $W_{\cC}$ in $\cC$ such that the cone $(M_{\cC}\rightarrow)$ of all successors of $M_{\cC}$ satisfies $(M_{\cC}\rightarrow)=\EKP_n     \cap\cC$ and the cone $(\rightarrow W_{\cC})$ of all predecessors of $W_{\cC}$ satisfies $(\rightarrow W_{\cC})=\EIP_n\cap\cC$ \cite[Theorem 3.3]{Julia}. Hence we can measure the distance between $M_{\cC}$ and $W_{\cC}$.
 
\begin{Definition}
Let $\cC$ be a regular AR component of $\modd \cK_n$. Define   an integer $\cW(\cC)$ satisfying $\tau^{\cW(\cC)+1}M_{\cC}=W_{\cC}$.  We call $\cW(\cC)$  the \textit{width} of $\cC$.

\end{Definition}

We now define

\begin{center}
$\Inj_{n,\Omega} := \{M\in\modd(T(n),\Omega)\mid \forall \delta \in (T(n),\Omega)_1: M(\delta)$ is injective $\}$,

$\Sur_{n,\Omega} :=\{M\in \modd(T(n),\Omega)\mid \forall \delta \in (T(n),\Omega)_1: M(\delta)$ is surjective $\}.$ 

\end{center}
Let $M\in\modd(T(n),\Omega)$ be an indecomposable module. By \cite[Theorem 8.1]{Daniel}, we know that $M\in \Inj_{n,\Omega}$ if and only if $\pi_\lambda(M)\in\EKP_n$.

A component  of category $\modd(T(n),\Omega)$ is said to be \textit{regular} if it contains a regular module. Let $\cD$  be a regular component of $\modd(T(n),\Omega)$. According to \cite[Corollary 7.2]{Daniel},  the map $\cD \rightarrow \pi_\lambda(\cD)$  is an isomorphism of  translation quivers. Consequently, the regular component $\cD$ is also of the type $\mathbb{Z}A_{\infty}$. Hence it is similar with that of the category $\modd \cK_n$, and there  exist unique quasi-simple modules $M_{\cD}, W_{\cD}$ in $\cD$ such that $(M_{\cD}\rightarrow)=\Inj_{n,\Omega}\cap \cD$ and $(\rightarrow W_{\cD})=\Sur_{n,\Omega}\cap  \cD$. Then we have the following

\begin{Definition}
Let $\cD$ be a regular AR component of $\modd(T(n),\Omega)$. We define an integer $\cW(\cD)$  satisfying $\tau^{\cW(\cD)+1}M_{\cD}=W_{\cD}$. We call $\cW(\cD)$  the \textit{width} of $\cD$.

\end{Definition}

\section{Graded Modules in regular components}

We use $\cR$ to denote the set of regular AR components of $\modd(T(n),\Omega)$, and we now focus on a component $\cD\in \cR$ in this section. Let $\sigma \cD=\sigma^-\cD$ denote the corresponding component under the action of reflection functors $\sigma$ and $\sigma^-$. Let $M\in\modd(T(n),\Omega)$ be a regular indecomposable module. We  define  $\mathcal{N}_M(b): =\mathcal{N}(b)\cap T(M)_0, \forall b\in T(M)_0$.

\begin{Lemma}\label{sink length}

Suppose that $M\in\Inj _{n,\Omega}$  is a regular indecomposable module. Then
 \begin{enumerate}
\item $M$ is a sink module and $d(M)\geq 4$.
 
 \item  Any leaf of $T(\sigma M)$ is a sink. In particular, $\sigma M$ is a sink module. 
 \end{enumerate}

\end{Lemma}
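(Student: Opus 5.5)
The plan is to read everything off from the injectivity of the arrow maps, using the local description of the reflection $\sigma$ at the sinks of $\Omega$. For a sink $y$ we take $(\sigma M)_y=\ker\bigl(\bigoplus_{x\in y^-}M_x\to M_y\bigr)$. This is the convention under which $\sigma^2\cong\tau$ moves $\Inj_{n,\Omega}$ towards $\Sur_{n,\Omega}$; if the paper's convention turns out to be the dual one, the argument has to be dualized, and I would check this first. At a source $x$ of $\Omega$ we have $(\sigma M)_x=M_x$. We also use that $\sigma M$ is again regular indecomposable by \cite[Lemma 3.1.2]{Jie}.

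\emph{Step 1: sources in the support have full neighbourhood.} Let $x\in\supp(M)$ be a source. Every arrow $x\to z$ with $z\in\mathcal{N}(x)$ carries an injective map $M_x\to M_z$. Since $M_x\neq 0$, we get $M_z\neq0$. Hence $\mathcal{N}(x)\subseteq\supp(M)$, so $|\mathcal{N}_M(x)|=n\geq 3$ and $x$ is not a leaf of $T(M)$. Therefore every leaf of $T(M)$ is a sink. The diameter walks of $T(M)$ then start and end in sinks, so $M$ is a sink module and $d(M)$ is even.

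\emph{Step 2: $d(M)\ge4$.} Suppose $d(M)=0$. Then $M$ is simple at a sink, hence simple projective, and so not regular. Suppose $d(M)=2$. Then $T(M)$ is a star with center a source $c$, and by Step 1 all $n$ neighbours of $c$ lie in the support and are leaves. Compute $\sigma M$ directly. At a leaf $y$ the only nonzero summand of $\bigoplus_{x\in y^-}M_x$ is $M_c$, and $M_c\to M_y$ is injective, so $(\sigma M)_y=0$. At a sink $z\notin\supp(M)$, no neighbour of $z$ lies in the support: any neighbour is a source, and a source in the support would force $z\in\supp(M)$ by Step 1. So $(\sigma M)_z=0$. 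Hence $\sigma M$ is the simple module at $c$, which is a sink in $\sigma\Omega$ and therefore not regular. This contradicts \cite[Lemma 3.1.2]{Jie}, so $d(M)\geq 4$.

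\emph{Step 3: part (b).} The same computation describes $\supp(\sigma M)$. Sinks outside $\supp(M)$ contribute nothing, by the argument in Step 2, and sources keep $M_x$. Now let $y$ be a sink of $\Omega$ with $(\sigma M)_y\neq 0$, and take $0\neq v=(v_x)_x\in\ker\bigl(\bigoplus_x M_x\to M_y\bigr)$. If only one component $v_x$ were nonzero, then $M(x\to y)(v_x)=0$ would contradict injectivity. So at least two neighbours $x$ of $y$ have $M_x\neq0=$ $(\sigma M)_x\ne 0$, and $y$ is not a leaf of $T(\sigma M)$. Consequently every leaf of $T(\sigma M)$ is a source of $\Omega$, i.e.\ a sink of $\sigma\Omega$. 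In particular the diameter walks of $T(\sigma M)$ start and end in sinks, so $\sigma M$ is a sink module.

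The main obstacle I expect is in Step 3, in keeping the support of $\sigma M$ under control. One has to make sure that no new leaves appear at sinks lying outside $\supp(M)$ but adjacent to it. Step 1, the full-neighbourhood property of sources in the support, is exactly what rules this out. I would also double-check the direction of the reflection convention against $\sigma^2\cong\tau$ before writing this up.
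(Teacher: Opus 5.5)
Your proof is correct, and your reflection convention is the paper's: kernels at the sinks of $\Omega$, as in \cite[VII.5]{Assem}, which matches the paper's use of $M_{b_0}=\coker\varphi$ for $\sigma^-$. Where you differ is in how you avoid the paper's cited machinery.

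For $d(M)\geq 4$, the paper does not compute anything. It quotes Ringel \cite[3.1]{Claus}: $M$ is complete, $d(\sigma M)=d(M)-2$ and $C(\sigma M)=C(M)$. If $d(M)=2$ this gives $d(\sigma M)=0$, so $\sigma M$ is not regular. You instead compute $\sigma M$ directly on the star and find it is concentrated at $c$, a sink of $\sigma\Omega$. You also dispose of $d(M)=0$, which the paper leaves implicit. In fact $\sigma^2M=0$ there, so you do not even need indecomposability of $\sigma M$.

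For part (b), the paper argues backwards. Suppose an old sink $b_0$ is a leaf of $T(\sigma M)$. Using $\sigma^-\sigma M\cong M$, one gets $M_{b_0}=\coker\varphi$, so $M_{b_1}\to M_{b_0}$ is a non-injective projection. This needs $\varphi\neq 0$, which comes from indecomposability of $\sigma M$. You argue forwards: a nonzero vector in $\ker\bigl(\bigoplus_{x\in y^-}M_x\to M_y\bigr)$ must have at least two nonzero components. The underlying idea is the same, but your version needs neither $\sigma^-\sigma M\cong M$ nor indecomposability of $\sigma M$. It also gives the slightly stronger statement that every old sink in $\supp(\sigma M)$ has at least two neighbours in the support. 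The paper's route is shorter given the citations, and it yields completeness and $C(\sigma M)=C(M)$ along the way, though neither is needed here.

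Two small remarks. In Step 3 you do not actually need Step 1 to control sinks outside $\supp(M)$. The kernel argument applies to every sink $y$ of $\Omega$ with $(\sigma M)_y\neq 0$, whether or not $y\in\supp(M)$. Also, the phrase ``$M_x\neq0=(\sigma M)_x\ne 0$'' should read ``$M_x\neq 0$, i.e.\ $(\sigma M)_x\neq 0$''.
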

\begin{proof}
According to the definition, every map is injective in $M$, that is, every leaf of $T(M)$ is a sink,  whence $M$ is a sink module. Note that $\sigma M$ is also indecomposable. Suppose that there exists a source  $b_0$ being  a leaf in  $T(\sigma M)$. Let $\alpha: b_0\rightarrow b_1$ be an arrow of  $T(\sigma M)_1$ and the $k$-linear map  $\varphi=(\sigma M)(\alpha): (\sigma M)_{b_0}\rightarrow (\sigma M)_{b_1}$ be non-trivial.  Since $\sigma^-(\sigma M)\cong M$, there exists a map  $\varphi'=M(\alpha'): M_{b_1}=(\sigma M)_{b_1}\rightarrow M_{b_0}=   \coker {\varphi}$ that is not injective in $M$, this is a contradiction.  Hence all leaves of  $T(\sigma M)_0$ are sinks  and $\sigma M $ is a sink module. Moreover, $M$ is complete and $d(\sigma M)=d(M)-2$, $C(\sigma M)=C(M)$ \cite[3.1]{Claus}.

 Since  $M$ is a  sink  and regular module, it follows that $d(M)\geq 4$: if $d(M)=2$, then we have  $d(\sigma M)=0$, that is, $\mid T(\sigma M)_0\mid =1$,  so that $\sigma M $ is not regular, a contradiction.

\end{proof}

\begin{corollary}\label{source length}

  Suppose that $N\in\Sur_{n,\Omega}$ is a regular indecomposable module. Then
  
  \begin{enumerate}
 \item $N$ is a source module and $d(N)\geq 4$.
 
 \item Any leaf of $T(\sigma^- N)$ is a source. In particular, $\sigma^- N$ is a source module. 
 \end{enumerate}

\end{corollary}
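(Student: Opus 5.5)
We deduce the corollary from Lemma \ref{sink length} by duality rather than repeating the argument. Consider the duality $D_{T(n)}: \modd(T(n),\Omega)\rightarrow\modd(T(n),\sigma\Omega)$ from \cite[7.2]{Daniel}. It reverses all arrows and replaces each linear map $N(\delta)$ by its dual. A linear map between finite-dimensional spaces is surjective exactly when its dual is injective. Hence $N\in\Sur_{n,\Omega}$ gives $D_{T(n)}N\in\Inj_{n,\sigma\Omega}$.

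We next check that $D_{T(n)}N$ is again a regular indecomposable module.
\begin{itemize}
\item Indecomposability is preserved because $D_{T(n)}$ is a duality.
\item For regularity we use the commutation rules $D_{T(n)}\circ\sigma\cong\sigma^-\circ D_{T(n)}$ and $D_{T(n)}\circ\sigma^-\cong\sigma\circ D_{T(n)}$ \cite[Lemma 3.2.8]{Jie}. Iterating them gives $\sigma^{t}(D_{T(n)}N)\cong D_{T(n)}(\sigma^{-t}N)$ for all $t\in\mathbb{Z}$.
\item Since $N$ is regular, $\sigma^{-t}N\neq 0$ for every $t$, so $\sigma^{t}(D_{T(n)}N)\neq 0$ for every $t$.
\end{itemize}
Lemma \ref{sink length} is stated for an arbitrary bipartite orientation, so it applies in $\modd(T(n),\sigma\Omega)$. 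It says that $D_{T(n)}N$ is a sink module with $d(D_{T(n)}N)\geq 4$. It also says that every leaf of $T(\sigma D_{T(n)}N)$ is a sink for $\sigma\Omega$.

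Finally we transfer these conclusions back to $N$. The duality does not change the support, so $T(D_{T(n)}N)=T(N)$ as trees and $d(N)=d(D_{T(n)}N)\geq 4$. A vertex is a sink for $\sigma\Omega$ exactly when it is a source for $\Omega$. The diameter walks of $T(N)$ therefore start and end in sources, so $N$ is a source module, which proves (a). For (b), we have $\sigma D_{T(n)}N\cong D_{T(n)}\sigma^-N$, so $T(\sigma D_{T(n)}N)=T(\sigma^-N)$. The sinks for $\sigma\Omega$ among its leaves are the sources for $\Omega$, so every leaf of $T(\sigma^-N)$ is a source and $\sigma^-N$ is a source module.

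The only point needing care is the orientation bookkeeping: $\sigma^-N$ lives over $\sigma\Omega$ while $D_{T(n)}N$ lives over $\sigma\Omega$ as well. The words sink and source must be read relative to the right orientation at each stage. If one prefers to avoid this, a direct proof is also possible by mirroring the proof of Lemma \ref{sink length}. Suppose a sink $b_0$ were a leaf of $T(\sigma^-N)$. Applying $\sigma$ would recover $N$ with a map into $b_0$ given by a kernel inclusion. This would force a non-surjective map in $N$, a contradiction. The bound $d(N)\geq 4$ then follows from the source analogue of \cite[3.1]{Claus}.
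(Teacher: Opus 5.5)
Your route is the paper's: you dualize to $D_{T(n)}N\in\Inj_{n,\sigma\Omega}$ and apply Lemma \ref{sink length}. You also check what the paper's ``Similarly, we get (b)'' leaves implicit, namely that $D_{T(n)}N$ is again regular and indecomposable, and that (b) passes through $\sigma D_{T(n)}N\cong D_{T(n)}\sigma^-N$. Part (a) is correct as written.

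In (b), however, the orientation bookkeeping you single out as the delicate point goes wrong.
\begin{itemize}
\item The module $\sigma D_{T(n)}N$ lives over $\sigma(\sigma\Omega)=\Omega$.
\item The word ``sink'' in Lemma \ref{sink length}(b) refers to the orientation of $\sigma M$: its proof rules out a leaf $b_0$ with an arrow $b_0\to b_1$ of $\sigma M$.
\item So the lemma says the leaves of $T(\sigma D_{T(n)}N)$ are sinks for $\Omega$, not for $\sigma\Omega$.
\end{itemize}
With your label, the chain ends with the leaves of $T(\sigma^-N)$ being sources for $\Omega$, and that is false. A leaf $b_0$ of $T(\sigma^-N)$ that is an $\Omega$-source is a sink for $\sigma\Omega$. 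Applying $\sigma$ then produces a non-surjective kernel inclusion $N_{b_0}\hookrightarrow N_{b_1}$, contradicting $N\in\Sur_{n,\Omega}$. This is exactly your own direct sketch, except that the map goes out of $b_0$, not into it.

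The correct last step is this. The leaves of $T(\sigma^-N)=T(\sigma D_{T(n)}N)$ are $\Omega$-sinks, i.e.\ sources for $\sigma\Omega$. Since $\sigma\Omega$ is the orientation over which $\sigma^-N$ lives, this is exactly statement (b). With this one label corrected, your duality argument and your direct sketch agree.
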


\begin{proof}
Since $N\in\Sur_{n,\Omega}$, we obtain  $D_{T(n)}(N)\in\Inj_{n,\sigma \Omega}$. By Lemma \ref{sink length}, we know that $N$ is a source module and $d(N)\geq 4$. Similarly,  we get  $(b)$.

\end{proof}

We give an example to see how the modules change in their $\sigma$-orbits. 
\begin{example}
 Let $N= k\xleftarrow{\lambda_1} k\xrightarrow{\lambda_2} k\in\modd(T(3),\Omega)$, where $\lambda_1,\lambda_2\in k\setminus\{0\}$. Then $N$ is indecomposable \cite[Proposition 1]{Claus5}. Actually,  $N$ is regular  since $\pi_\lambda(N)$ is regular. Let $\cD$ be a regular component. Without loss of generality, we assume that $N\in \cD$.  Let $M=\sigma^{-2}N$. It is easy to check that  $M\in \Inj_{3,\Omega}\cap \cD$. Note that $N\nin \Inj_{3,\Omega}\cap \cD$ and $N$ is quasi-simple \cite[Proposition 3.4]{Bo}. Then $M$ is quasi-simple and $M=M_{\cD}$. One can check that $\sigma^- M$ is a sink module.

\end{example}

\begin{Lemma}\label{M}
 Suppose that $M\in\Inj_{n,\Omega}\cap \cD$  is a regular indecomposable module. Then   $\sigma^- M\in\Inj_{n,\sigma\Omega}\cap  \sigma \cD$.
\end{Lemma}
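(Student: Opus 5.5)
The plan is to verify the defining condition of $\Inj_{n,\sigma\Omega}$ directly from the explicit description of the reflection functor $\sigma^-$. Membership of $\sigma^- M$ in $\sigma\cD$ needs no work: by \cite[Lemma 3.1.2]{Jie} and \cite[Lemma 3.2.7]{Jie}, $\sigma^-$ sends the regular indecomposable $M$ to a regular indecomposable module. Since $\sigma$ and $\sigma^-$ are quasi-inverse equivalences on regular modules, the component of $\sigma^- M$ is by definition $\sigma\cD=\sigma^-\cD$. So the whole content of the statement is that every structure map of $\sigma^- M$ is injective.

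First I describe $\sigma^- M$ explicitly. The reflection at all sources leaves the spaces at sinks $y$ of $\Omega$ unchanged. For a source $x$ of $\Omega$ it sets
\[
(\sigma^- M)_x=\coker\Big(\varphi_x\colon M_x\longrightarrow \bigoplus_{y\in x^+}M_y\Big),\qquad \varphi_x=(M(x\to y))_{y\in x^+}.
\]
Every arrow of $\sigma\Omega$ has the form $y\to x$ with $x$ an old source and $y\in x^+$. The corresponding map $(\sigma^- M)(y\to x)$ is the inclusion of $M_y$ into the direct sum, followed by the canonical projection $\pi_x$ onto the cokernel.

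Next I show that this composite is injective. Fix an arrow $y\to x$ of $\sigma\Omega$ and take $m\in M_y$ mapping to $0$. Then $(0,\dots,0,m,0,\dots,0)=\varphi_x(v)$ for some $v\in M_x$. This gives $M(x\to y)(v)=m$ and $M(x\to y')(v)=0$ for every $y'\in x^+\setminus\{y\}$. Since $T(n)$ is $n$-regular with $n\geq 3$, there is at least one such $y'\neq y$. Because $M\in\Inj_{n,\Omega}$, the map $M(x\to y')$ is injective, so $v=0$ and hence $m=0$. This argument still holds if $M_{y'}=0$, because injectivity into the zero space forces $M_x=0$, and then $v=0$ trivially. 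Therefore every map of $\sigma^- M$ is injective, i.e. $\sigma^- M\in\Inj_{n,\sigma\Omega}$.

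The only delicate point is correctly identifying $\sigma^-$ at a source as a cokernel construction, with the new arrows given by the projections, together with the availability of a second neighbour $y'$. That second neighbour is exactly where $n\geq 3$, or even just $n\geq 2$, enters. Beyond this, I expect no real obstacle; the rest is bookkeeping with the cited results from \cite{Jie}.
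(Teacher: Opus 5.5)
Your proposal is correct and follows essentially the same argument as the paper: realize $(\sigma^- M)_x$ as the cokernel of $M_x\to\bigoplus_{y\in x^+}M_y$, lift a kernel element of $M_y\to(\sigma^-M)_x$ to some $v\in M_x$ whose other components all vanish, and use injectivity of a second arrow out of $x$ to force $v=0$. The paper's remark that $\bigcap_{j\neq i}\ker f_j=(0)$ suffices corresponds to your observation that one extra neighbour is enough. You also make explicit two points the paper leaves implicit: membership in $\sigma\cD$, and sources $x$ with $M_x=0$.
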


\begin{proof}

 Let $M'=\sigma^{-}M$.  We  want to prove that $M'\in$ Inj$_{n,\sigma\Omega}\cap \sigma \cD$. Since $M$ is a sink module,  all neighbours of any source $x \in T(M)_0$ still belong to $T(M)_0$. We have injective map $f_i : M_x \rightarrow M_{y_i}$ for every $ i\in\{ 1,\cdots, n\}$ and $y_i\in \mathcal{N}_{M}(x)$. Put $ M’_x:= (\bigoplus_{y_i\in \mathcal{N}_{M}(x)} M_{y_i})/ \im f$, where $f(m)= \begin{bmatrix}
f_1(m)\\
\vdots\\
f_n(m)
\end{bmatrix},  m\in M_x$. For each $i$, we have to show that the map
\begin{center}
$g_i : M'_{y_i}=M_{y_i} \rightarrow M'_x  ; m_i \mapsto (0, \cdots,0,\overline{m}_i,0, \cdots0)$
\end{center}
is injective, where $g_i$ maps $m_i$ to the $i$-th position in $M'_x$ with the image $\overline{m}_i$. Now let $m_i \in \ker   g_i$. Then $ (0,\cdots,0, \overline{m}_i, 0, \cdots, 0) \in \im f$. Hence there exists $m \in M_x$ such that $m_i=f_i(m)$, while $f_j(m)=0$ for all $j\ne i$. Since each $f_j$ is injective
(actually,  $\bigcap_{j\ne i}\ker f_j=(0)$ suffices), it follows that $m=0$, whence $m_i=0$. Consequently, $g_i$ is injective.

\end{proof}

\begin{Lemma}\label{N}
Suppose that $N\in\Sur_{n,\Omega}\cap \cD$ is a regular indecomposable module. Then $\sigma N\in\Sur_{n,\sigma\Omega}\cap \sigma \cD$.

\end{Lemma}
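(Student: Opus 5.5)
Lemma \ref{N} is the dual of Lemma \ref{M}, and I would prove it by duality, as in Corollary \ref{source length}. Let $N\in\Sur_{n,\Omega}\cap\cD$ be regular indecomposable and set $N^\ast:=D_{T(n)}(N)\in\modd(T(n),\sigma\Omega)$. The duality turns every surjective map $N(\delta)$ into an injective map in the opposite direction, so $N^\ast\in\Inj_{n,\sigma\Omega}$. Since $D_{T(n)}$ is a duality and, by the remark before the lemma, commutes with the reflection functors up to swapping $\sigma$ and $\sigma^-$, the module $N^\ast$ is again regular and indecomposable, and it lies in the regular component $D_{T(n)}(\cD)$ of $\modd(T(n),\sigma\Omega)$.

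Next I apply Lemma \ref{M} to $N^\ast$, with the bipartite orientation $\sigma\Omega$ in place of $\Omega$. This is legitimate because Lemma \ref{M} and its proof only use that the orientation is bipartite. It gives $\sigma^-N^\ast\in\Inj_{n,\sigma^2\Omega}=\Inj_{n,\Omega}$. Using $D_{T(n)}\circ\sigma(N)\cong\sigma^-\circ D_{T(n)}(N)$ \cite[Lemma 3.2.8]{Jie}, we get $D_{T(n)}(\sigma N)\cong\sigma^-N^\ast$. Dualizing back, every arrow map of $\sigma N$ is dual to an injective map, hence surjective, so $\sigma N\in\Sur_{n,\sigma\Omega}$. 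Finally, $\sigma N$ lies in $\sigma\cD$, because $\sigma$ carries the component $\cD$ onto $\sigma\cD$ by definition.

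As a cross-check, I would also give the direct argument dual to the proof of Lemma \ref{M}. By Corollary \ref{source length}, $N$ is a source module, so every sink $x\in T(N)_0$ has all its neighbours in $T(N)_0$. The maps $f_i\colon N_{y_i}\to N_x$ are surjective, and $(\sigma N)_x=\ker\bigl(\bigoplus_i N_{y_i}\xrightarrow{(f_i)}N_x\bigr)$, with new maps given by the projections $p_i\colon(\sigma N)_x\to N_{y_i}$. For surjectivity of $p_i$: given $m_i\in N_{y_i}$, pick $j\neq i$ (possible since $n\ge3$) and $m_j$ with $f_j(m_j)=-f_i(m_i)$, using surjectivity of $f_j$; then the tuple with entries $m_i,m_j$ and zeros elsewhere lies in the kernel and maps to $m_i$. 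At vertices outside $T(N)$ nothing new is created, because the source leaves stay leaves.

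The main obstacle is the bookkeeping: making sure the duality identities from \cite{Jie} apply with the orientation swapped, and that components correspond as claimed. The direct kernel computation sidesteps this and is the fallback.
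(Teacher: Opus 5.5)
Your duality argument is correct and is the paper's proof: pass to $D_{T(n)}(N)\in\Inj_{n,\sigma\Omega}$, apply Lemma \ref{M}, and use $D_{T(n)}\circ\sigma\cong\sigma^-\circ D_{T(n)}$. Your conclusion $\sigma^-D_{T(n)}(N)\in\Inj_{n,\Omega}$ is in fact more accurate orientation bookkeeping than the paper's $\Inj_{n,\sigma\Omega}$. One correction to the cross-check: it is false that nothing new appears outside $T(N)$, since each sink $x\notin T(N)_0$ adjacent to a leaf $y$ of $T(N)$ gets $(\sigma N)_x\cong N_y\neq 0$, so the source leaves do not stay leaves. This is harmless, because your kernel computation uses only surjectivity of the $f_j$ and so works verbatim at every sink, including those with $N_x=0$.
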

\begin{proof}
Since $D_{T(n)}(N)\in\Inj_{n,\sigma\Omega}$,  it follows from Lemma \ref{M} that  $\sigma^- \circ D_{T(n)}(N)\cong D_{T(n)}\circ \sigma(N)\in$ Inj$_{n,\sigma\Omega}$, that is, $\sigma N\in$ Surj$_{n,\sigma\Omega}\cap \sigma\cD$. 

\end{proof}

 Let $M\in \cD$ be a regular indecomposable module. Recall that   $ql(M)$  is the quasi-length of $M$ in $\cD$.  Then we have the following.

 \begin{Lemma}\label{MD}

 $\iota(M_{\sigma \cD})\in \{\iota(M_{\cD})-1, \iota(M_{\cD})+1\}$.

\end{Lemma}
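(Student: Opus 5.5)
We compare the two distinguished quasi-simples $M_{\cD}$ and $M_{\sigma\cD}$ via the equivalence $\sigma^-$ between regular modules of $\modd(T(n),\Omega)$ and $\modd(T(n),\sigma\Omega)$. Since $\sigma$ and $\sigma^-$ are quasi-inverse equivalences on regular modules, they commute with $\tau=\sigma^2$ and map the component $\cD$ isomorphically (as a translation quiver) onto $\sigma\cD$. In particular they preserve quasi-simplicity and quasi-length, and they carry the cone $(X\rightarrow)$ onto the cone $(\sigma^- X\rightarrow)$. The index changes by a fixed shift: if $M=\sigma^t M_0$ with $M_0$ a sink module of smallest radius, then $\sigma^-M=\sigma^{t-1}M_0$. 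So, measured in the common $\sigma$-orbit, $\iota(\sigma^-M)=\iota(M)-1$ and $\iota(\sigma M)=\iota(M)+1$.

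\textbf{Step 1.} By Lemma \ref{M}, $\sigma^- M_{\cD}\in\Inj_{n,\sigma\Omega}\cap\sigma\cD$, and $\sigma^-$ maps $(M_{\cD}\rightarrow)$ into $\Inj_{n,\sigma\Omega}\cap\sigma\cD=(M_{\sigma\cD}\rightarrow)$. Since $\sigma^- M_{\cD}$ is quasi-simple and lies in the cone of the quasi-simple $M_{\sigma\cD}$, we get $\sigma^- M_{\cD}=\tau^{-j}M_{\sigma\cD}$ for some $j\ge 0$. (In a $\mathbb{Z}A_\infty$ component, the quasi-simples in the cone of a quasi-simple $X$ are exactly the $\tau^{-j}X$, $j\ge0$.)

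\textbf{Step 2.} Apply the same reasoning in the other direction: Lemma \ref{M} with the roles of $\Omega$ and $\sigma\Omega$ exchanged gives $\sigma^- M_{\sigma\cD}\in\Inj_{n,\Omega}\cap\sigma^2\cD$. Since $\sigma^2=\tau$ fixes components, $\sigma^2\cD=\cD$, and Step 1's argument yields $\sigma^- M_{\sigma\cD}=\tau^{-j'}M_{\cD}$ with $j'\ge0$. Composing and using $\sigma^{-2}=\tau^{-1}$ gives
\[
\tau^{-1}M_{\cD}=\sigma^-\sigma^-M_{\cD}=\tau^{-j-j'}M_{\cD},
\]
so $j+j'=1$.

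\textbf{Step 3.} If $j=0$, then $M_{\sigma\cD}=\sigma^- M_{\cD}$, so $\iota(M_{\sigma\cD})=\iota(M_{\cD})-1$. If $j=1$, then $M_{\sigma\cD}=\tau\sigma^-M_{\cD}=\sigma M_{\cD}$, so $\iota(M_{\sigma\cD})=\iota(M_{\cD})+1$. Either way the index lies in the required set.

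\textbf{Main obstacle.} The delicate point is bookkeeping: making sure the index is computed in the same $\sigma$-orbit with a consistent base module $M_0$, so that $\iota$ really shifts by exactly $\pm1$ under $\sigma^{\mp}$. This needs that the smallest-radius sink module in the orbit is well defined (\cite[Theorem 1]{Claus}). I would handle it by fixing one $\sigma$-orbit, with the component identifications $\sigma^2\cD=\cD$, throughout the argument. A second check is that Lemma \ref{M}, stated for $\Omega$, applies verbatim to $\sigma\Omega$, which it does since its proof uses only bipartiteness.
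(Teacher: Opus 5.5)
Your proof is correct and follows the paper's argument: both apply Lemma \ref{M} in the two directions ($\cD\to\sigma\cD$, and $\sigma\cD\to\sigma^2\cD=\cD$) and use that the quasi-simples in the cone of a quasi-simple $X$ are exactly the $\tau^{-j}X$ with $j\ge 0$. Your identity $\tau^{-1}M_{\cD}=\tau^{-j-j'}M_{\cD}$ (which relies on $\tau$ acting freely on a $\mathbb{Z}A_\infty$ component) is a tidier packaging of what the paper does with the two index inequalities $\iota(M_{\cD})-1\le\iota(M_{\sigma\cD})\le\iota(M_{\cD})+1$ plus the parity observation that rules out $\iota(M_{\sigma\cD})=\iota(M_{\cD})$.
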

\begin{proof}
 Let $M=M_{\cD}$, and let $N\in\Inj_{n,\Omega}\cap \cD$ with $ql(N)=1$. Since $M$ is regular with $ql(M)=1$,  we get  $N=\tau^{-t} M=\sigma^{-2t} M$ and $\iota(N)=\iota(M)-2t$ for some $t\in \mathbb{N}_{0}$ in the regular component $\cD$. Hence $M$ is the one with the biggest index $\iota(M)$ in its $\sigma$-orbit   located in $\Inj_{n,\Omega}\cap  \cD$.  On the other hand, we have $\sigma^{-}M \in\Inj_{n,\sigma\Omega}\cap \sigma \cD$  by Lemma \ref{M}. Moreover, we have $ ql(\sigma^{-} M)=ql(M_{\sigma \cD})=ql(M)$, and $\iota(\sigma^-M)\leq \iota (M_{\sigma \cD})$. Then  $\iota(M_{\sigma \cD})\geq \iota (M_{\cD})-1$ and $\iota(M_{\sigma \cD})\neq \iota(M_{\cD})$. Otherwise,  suppose that $\iota(M_{\sigma \cD}) \leq \iota(M)-3$. Then $\iota(\sigma^-M)=\iota(M)-1>\iota(M_{\sigma \cD})$, a contradiction. Similarly,  suppose that $\iota(M_{\sigma \cD})\geq \iota(M)+3$. We have $\iota(\sigma^- M_{\sigma \cD})= \iota(M_{\sigma \cD})-1>\iota(M)$ and $\sigma^- M_{\sigma \cD}\in$ Inj$_{n,\Omega}\cap \cD$ by Lemma \ref{M},  a contradiction.  Hence  $\iota(M_{\sigma \cD})\in \{ \iota(M)-1, \iota(M)+1\}$.
\end{proof}

\begin{Lemma}\label{WD}

$\iota(W_{\sigma \cD})\in\{ \iota(W_{\cD})-1, \iota(W_{\cD})+1\}$.

\end{Lemma}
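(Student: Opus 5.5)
The plan is to reduce Lemma \ref{WD} to Lemma \ref{MD} by duality. If that reduction turns out to be awkward, the fallback is to copy the proof of Lemma \ref{MD} directly, with Lemma \ref{N} in place of Lemma \ref{M}, $\sigma$ in place of $\sigma^-$, and all index inequalities reversed.

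In the direct version, put $W=W_{\cD}$. First I would show that $W$ has the smallest index among the quasi-simple modules of its $\sigma$-orbit that lie in $\Sur_{n,\Omega}\cap\cD$. Let $N\in\Sur_{n,\Omega}\cap\cD$ with $ql(N)=1$. Since $(\rightarrow W)=\Sur_{n,\Omega}\cap\cD$, the module $N$ is a predecessor of $W$ on the bottom row, so $N=\tau^{t}W=\sigma^{2t}W$ for some $t\in\mathbb{N}_0$. Hence $\iota(N)=\iota(W)+2t\geq\iota(W)$.

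Next, Lemma \ref{N} gives $\sigma W\in\Sur_{n,\sigma\Omega}\cap\sigma\cD$. Because $\sigma$ is an equivalence on regular modules, it preserves the $\mathbb{Z}A_\infty$ structure and quasi-length, so $ql(\sigma W)=1$. Applying the first step to $\sigma\cD$ then gives $\iota(W_{\sigma\cD})\leq\iota(\sigma W)=\iota(W)+1$.

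Parity should also be checked. $W_{\sigma\cD}$ and $\sigma W$ are quasi-simples in the same component, so they differ by a power of $\tau=\sigma^2$. Their indices therefore have the same parity, and $\iota(W_{\sigma\cD})\equiv\iota(W)+1\pmod 2$. This rules out $\iota(W_{\sigma\cD})=\iota(W)$ and leaves $\iota(W_{\sigma\cD})\in\{\ldots,\iota(W)-1,\iota(W)+1\}$.

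It remains to exclude $\iota(W_{\sigma\cD})\leq\iota(W)-3$. Suppose it held. Apply the analogue of Lemma \ref{N} for the orientation $\sigma\Omega$, which moves $\sigma\cD$ back to $\cD$. This gives $\sigma W_{\sigma\cD}\in\Sur_{n,\Omega}\cap\cD$, a quasi-simple module with index $\iota(W_{\sigma\cD})+1\leq\iota(W)-2<\iota(W)$. That contradicts the minimality established in the first step.

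The main obstacle is the bookkeeping of the index $\iota$ across the orientations $\Omega$ and $\sigma\Omega$. I need $\iota(\sigma X)=\iota(X)+1$ in a form consistent with how it was used in Lemma \ref{MD}, and I need that Lemma \ref{N} holds symmetrically for $\sigma\Omega$. The latter follows because the bipartite orientation is arbitrary: $\sigma\Omega$ is itself bipartite, and reflecting at its sinks returns $\Omega$.
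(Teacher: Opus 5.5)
Your argument is correct and follows the paper's proof essentially step for step. You use the minimality of $\iota(W_{\cD})$ among quasi-simples in $\Sur_{n,\Omega}\cap\cD$, apply Lemma~\ref{N} to $W_{\cD}$ and to $W_{\sigma\cD}$ to stop the index moving by $3$ or more in either direction, and finish with a parity argument. Your explicit parity justification ($W_{\sigma\cD}$ and $\sigma W_{\cD}$ are quasi-simples of one component, hence differ by a power of $\tau=\sigma^2$ and lie in a common $\sigma$-orbit) fills in a step the paper only asserts.
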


\begin{proof}
Let $U\in\Sur_{n,\Omega}\cap \cD$ with $ql(U)=1$. Similarly,  we can see that  $U=\tau^s W_{\cD}=\sigma^{2s}W_{\cD}$ and $\iota(U)=\iota(W_{\cD})+2s$ for some $s\in \mathbb{N}_{0}$. Hence $\iota(U)\geq \iota(W_{\cD})$. By Lemma \ref{N},  we have $\sigma W_{\sigma \cD}\in\Sur_{n,\Omega}\cap \cD$.   Suppose that $\iota(W_{\sigma \cD})\leq  \iota(W_{\cD})-3$. Then $\iota(\sigma W_{ \sigma \cD})=\iota(W_{\sigma \cD})+1<\iota(W_{\cD})$, a contradiction.     Suppose that $\iota ( W_{\sigma \cD})\geq \iota(W_{\cD})+3$. Then $\iota(\sigma W_{\cD})=\iota(W_{\cD})+1<\iota(W_{\sigma \cD})$ and $\sigma W_{\cD}\in$ Surj$_{n,\sigma\Omega}\cap \sigma \cD$  by Lemma \ref{N}, a contradiction. Hence $\iota(W_{\sigma \cD})< \iota(W_{\cD})+3$. Finally, we have $\iota(W_{\sigma \cD})=\iota(W_{\cD})+2s'+1$ for some $s'\in\mathbb{Z}$.
\end{proof}

Let $M\in \cD$ be a regular indecomposable module,  and let $b(M)$ be  the number of flow modules in the $\sigma$-orbit of $M$. Then  $b(M)=b(N)$ for any $N\in\cD$   \cite[Section 7]{Claus}.  Hence we can  define $b(\cD):=b(M)$.

\begin{Theorem}\label{D}
Let $\cD$ be a regular component of $\modd(T(n),\Omega)$. Then 
\begin{center}
$ \mid \cW(\cD) -\cW(\sigma \cD)\mid \leq 1 $ and $\frac{b(\cD)+1}{2}\leqslant \cW(\cD)$.
\end{center}
 
\end{Theorem}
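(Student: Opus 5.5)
The plan is to translate both inequalities into statements about indices inside one $\sigma$-orbit. Since $\tau\cong\sigma^2$ on regular modules, the defining relation $\tau^{\cW(\cD)+1}M_{\cD}=W_{\cD}$ reads $W_{\cD}=\sigma^{2\cW(\cD)+2}M_{\cD}$. In particular $M_{\cD}$ and $W_{\cD}$ lie in the same $\sigma$-orbit, and
\[
\iota(W_{\cD})-\iota(M_{\cD})=2\cW(\cD)+2 .
\]
The same holds for $\sigma\cD$. Here I use that $M_{\sigma\cD}$ and $W_{\sigma\cD}$ lie in the same $\sigma$-orbit as the modules of $\cD$, so the index $\iota$ is measured with respect to the same base module $M_0$ (the sink module of smallest radius in that orbit).

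\emph{First inequality.} By Lemma \ref{MD}, $\iota(M_{\sigma\cD})-\iota(M_{\cD})\in\{\pm1\}$. By Lemma \ref{WD}, $\iota(W_{\sigma\cD})-\iota(W_{\cD})\in\{\pm1\}$. Subtracting these gives
\[
(2\cW(\sigma\cD)+2)-(2\cW(\cD)+2)\in\{-2,0,2\},
\]
that is, $|\cW(\cD)-\cW(\sigma\cD)|\le 1$. The only point needing care is checking that the index convention is consistent across $\cD$ and $\sigma\cD$. This holds because $\sigma^{\pm}$ act on the whole orbit and $\iota$ is defined orbitwise.

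\emph{Second inequality.} Put $a=\iota(M_{\cD})$, so that $\iota(W_{\cD})=a+2\cW(\cD)+2$. I will show that every flow module in the orbit has index in the range $a+2,\dots,a+2\cW(\cD)$. This range contains $2\cW(\cD)-1$ integers, so $b(\cD)\le 2\cW(\cD)-1$, which is the claim.

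The exclusions go as follows.
\begin{itemize}
\item Indices $\le a$. Iterating Lemma \ref{M} shows $\sigma^{-t}M_{\cD}$ lies in $\Inj$ (for the appropriate orientation) for all $t\ge 0$. By Lemma \ref{sink length}(a) each of these is a sink module. A sink module has a diameter walk between two sinks, so with bipartite orientation its diameter is even, and it is not a flow module.
\item Index $a+1$. The module $\sigma M_{\cD}$ is a sink module by Lemma \ref{sink length}(b), hence not a flow module.
\item Indices $\ge a+2\cW(\cD)+2$. Iterating Lemma \ref{N} shows $\sigma^{t}W_{\cD}$ lies in $\Sur$ for all $t\ge0$. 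By Corollary \ref{source length}(a) these are source modules, hence have even diameter.
\item Index $a+2\cW(\cD)+1$. The module $\sigma^-W_{\cD}$ is a source module by Corollary \ref{source length}(b).
\end{itemize}

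The main obstacle is the bookkeeping at these two boundary indices $a+1$ and $a+2\cW(\cD)+1$. It is exactly parts (b) of Lemma \ref{sink length} and Corollary \ref{source length} that improve the naive bound $b\le 2\cW+1$ to $b\le 2\cW-1$. I also need to confirm that the lemmas apply after changing orientation, i.e.\ that they hold for $(T(n),\sigma^t\Omega)$ as well; this is fine since $\sigma^t\Omega$ is again bipartite.
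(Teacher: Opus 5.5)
Your proposal is correct and follows essentially the same route as the paper. For the first inequality you use the identity $\iota(W_{\cD})-\iota(M_{\cD})=2\cW(\cD)+2$ together with Lemmas \ref{MD} and \ref{WD}. For the second you confine the flow modules strictly between $\iota(\sigma M_{\cD})$ and $\iota(\sigma^-W_{\cD})$, and you are more explicit than the paper in excluding the indices $\le a$ and $\ge a+2\cW(\cD)+2$ via iterated Lemmas \ref{M} and \ref{N}.

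One point to add, which the paper also leaves to its closing Remark: counting ``$2\cW(\cD)-1$ integers'' presupposes $\cW(\cD)\ge 1$. Your own exclusions force this. If $\cW(\cD)\le 0$, the module $\sigma M_{\cD}$ would have index $\ge a+2\cW(\cD)+1$, so it would be both a sink module and a source module, which is impossible for a bipartite orientation.
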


\begin{proof}
Since  $\tau=\sigma^2$,  we have $(\sigma^2)^{(\cW(\cD)+1)} M_{\cD}=W_{\cD}$.   Then $2(\cW(\cD)+1)+\iota(M_{\cD})=\iota(W_{\cD})$, that is, $\cW(\cD)=\frac{\iota(W_{\cD})-\iota(M_{\cD})}{2}-1 $. According to Lemma \ref{MD} and Lemma \ref{WD}, we have inequalities
\begin{equation}
\begin{split}
 \frac{\iota(W_{\cD})-1-(\iota(M_{\cD})+1)}{2}-1  \leq \cW(\sigma D)=\frac{\iota(W_{\sigma \cD})-\iota(M_{\sigma \cD})}{2}-1  \leq \frac{\iota(W_{\cD})+1-(\iota(M_{\cD})-1)}{2}-1.
\end{split}
\end{equation}
 
   Thus
\begin{equation}\label{1}
 \cW(\cD) -1 \leq \cW(\sigma \cD)\leq \cW(\cD)+1.
\end{equation}

According to Lemma \ref{sink length} and Corollary \ref{source length},  there are only sink modules contained in $\Inj_{n,\Omega}$ and   source modules  contained  in  $\Sur_{n,\Omega}$. Let $N$  be a regular indecomposable module  in the  $\sigma$-orbit of the module $M_{\cD}$.  Then   $N=\sigma^q M_{\cD}$ for some $q\in \mathbb{N}_{0}$. Suppose that $N$ is a flow module. Since we already know that $\sigma M_{\cD}$ is a sink module and $\sigma^-W_{\cD}$ is a source module, we have
\begin{center}
$\iota(M_{\cD})+1=\iota(\sigma M_{\cD})<\iota(N)=q+\iota(M_{\cD})<\iota(\sigma^-W_{\cD})=\iota(W_{\cD})-1$. 
\end{center}
Hence $1<q < \iota(W_{\cD})-1-\iota(M_{\cD})= 2 \cW(\cD)+1$, that is, $q\in \{2,3, \cdots,2\cW(\cD)\}$.   Then  the number of flow modules $b(N)\leq 2\cW(\cD)-1$. However,   $b(\cD)=b(N)$.
Hence  $\frac{b(\cD)+1}{2}\leqslant \cW(\cD)$.

\end{proof}

\begin{corollary}\label{cor}
Let $\cD$ be a regular component of $\modd(T(n),\Omega)$. Suppose that $\cW(\cD)=\cW(\sigma \cD)$. Then $\cW(\cD)\geq \frac{b(\cD)+2}{2}$.

\end{corollary}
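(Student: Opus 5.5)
Note first that the bound $\frac{b(\cD)+1}{2}\leq\cW(\cD)$ of Theorem~\ref{D} comes from counting the admissible positions $q$ with $N=\sigma^q M_{\cD}$ a flow module. In its proof these positions satisfy $q\in\{2,\dots,2\cW(\cD)\}$. Since $\iota(W_{\cD})-\iota(M_{\cD})=2\cW(\cD)+2$ is even, the flow modules of the $\sigma$-orbit of $M_{\cD}$ lie strictly between $\sigma M_{\cD}$ and $\sigma^{-}W_{\cD}=\sigma^{2\cW(\cD)+1}M_{\cD}$. So there are $2\cW(\cD)-1$ candidate slots, and to gain $\frac12$ I only need to exclude one of them. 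The plan is to show that under the hypothesis $\cW(\cD)=\cW(\sigma\cD)$ either $\sigma^2 M_{\cD}$ or $\sigma^{2\cW(\cD)}M_{\cD}=\sigma^{-2}W_{\cD}$ is not a flow module. Then $b(\cD)\leq 2\cW(\cD)-2$, i.e.\ $\cW(\cD)\geq\frac{b(\cD)+2}{2}$.

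\textbf{Step 1: parities of the indices.} Write $\iota(M_{\sigma\cD})=\iota(M_{\cD})+\epsilon$ and $\iota(W_{\sigma\cD})=\iota(W_{\cD})+\delta$ with $\epsilon,\delta\in\{\pm1\}$, by Lemma~\ref{MD} and Lemma~\ref{WD}. The formula $\cW=\frac{\iota(W)-\iota(M)}{2}-1$ from the proof of Theorem~\ref{D} shows that $\cW(\cD)=\cW(\sigma\cD)$ forces $\epsilon=\delta$. So either both indices move up by one, or both move down by one.

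\textbf{Step 2: transport sink/source information.} Suppose $\epsilon=\delta=+1$. By Lemma~\ref{M}, $\sigma^-M_{\sigma\cD}\in\Inj_{n,\Omega}\cap\cD$. Its index is $\iota(M_{\cD})$, so it lies in the same $\sigma$-orbit as $M_{\cD}$, whence $\sigma^-M_{\sigma\cD}\cong M_{\cD}$ and $M_{\sigma\cD}\cong\sigma M_{\cD}$. Applying Lemma~\ref{sink length}(b) to $M_{\sigma\cD}$, which lies in $\Inj_{n,\sigma\Omega}$, shows that $\sigma M_{\sigma\cD}=\sigma^2M_{\cD}$ is a sink module. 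In particular it has even diameter and is not a flow module, which excludes the slot $q=2$.

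In the case $\epsilon=\delta=-1$, argue dually with Lemma~\ref{N} and Corollary~\ref{source length}(b). We get $W_{\sigma\cD}\cong\sigma^-W_{\cD}$, and then $\sigma^-W_{\sigma\cD}=\sigma^{-2}W_{\cD}$ is a source module, which excludes the slot $q=2\cW(\cD)$.

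\textbf{Main obstacle.} The delicate point is the identification $\sigma^-M_{\sigma\cD}\cong M_{\cD}$, and dually $\sigma W_{\sigma\cD}\cong W_{\cD}$. Matching the index alone is not enough; one also needs the quasi-simple modules to sit in the same $\sigma$-orbit. I would argue this as in Lemma~\ref{MD}. The functor $\sigma^-$ is an equivalence on regular modules that preserves quasi-length, so $\sigma^-M_{\sigma\cD}$ is quasi-simple in $\cD$. Since it lies in $\Inj_{n,\Omega}$, it is of the form $\tau^{-t}M_{\cD}$ with $t\ge0$, and then equality of indices forces $t=0$.

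Two smaller checks remain. First, $\cW(\cD)\geq1$, so that the slots $q=2$ and $q=2\cW(\cD)$ exist; this follows from Theorem~\ref{D} when $b(\cD)\geq1$, and the statement is trivial when $b(\cD)=0$ and $\cW(\cD)\ge1$. Second, the flow modules counted by $b(\cD)$ all lie in the window $2\le q\le2\cW(\cD)$ of this orbit, which is exactly the content of the proof of Theorem~\ref{D}.
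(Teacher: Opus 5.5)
Your proof is correct. It uses the same key inputs as the paper: Lemmas~\ref{MD} and \ref{WD}, and Lemma~\ref{sink length}(b) and Corollary~\ref{source length}(b) applied inside $\sigma\cD$. The bookkeeping, however, is different.

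\textbf{The paper's route.} It argues by contradiction and counts across both components at once. If $\cW(\cD)=\frac{b(\cD)+1}{2}$, then $\cM\cup\cM(\sigma)$ contains exactly $\cW(\cD)+\cW(\sigma\cD)=b(\cD)+1$ quasi-simples. This set contains all $b(\cD)$ flow modules of the orbit, because $\Inj$ and $\Sur$ contain only sink and source modules. It also contains a quasi-simple sink and a quasi-simple source, which is one module too many.

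\textbf{Your route.} You stay inside the window $\sigma^2M_{\cD},\dots,\sigma^{2\cW(\cD)}M_{\cD}$ from the proof of Theorem~\ref{D}. You use $\epsilon=\delta$ to get $M_{\sigma\cD}\cong\sigma M_{\cD}$ or $W_{\sigma\cD}\cong\sigma^-W_{\cD}$. This names an endpoint of the window that is a sink or source module, so it cannot be a flow module.

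\textbf{What each buys.} Your version makes explicit the step the paper settles with a bare citation of Lemma~\ref{sink length} and Corollary~\ref{source length}. The extra sink is $\sigma M_{\sigma\cD}$ or $\sigma M_{\cD}$ according to the sign of $\epsilon$, and dually for the source. It also shows that only the case $\epsilon=-1$, $\delta=+1$ must be ruled out, so $\cW(\sigma\cD)\leq\cW(\cD)$ already suffices. The paper's version is symmetric and needs no case split. Its counting step uses only that flow modules avoid $\Inj$ and $\Sur$, not where they sit in the orbit. Run without the hypothesis, the same count gives $\cW(\cD)+\cW(\sigma\cD)\geq b(\cD)+2$, which yields the same weakening.

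\textbf{Two minor points.} First, the same-orbit concern in your ``main obstacle'' is automatic. The quasi-simples of $\cD$ and $\sigma\cD$ together form the single $\sigma$-orbit of $M_{\cD}$, and this is already used in Lemma~\ref{MD}. Second, $\cW(\cD)\geq 1$ follows from Theorem~\ref{D} for every value of $b(\cD)$, since $\cW(\cD)\geq\frac12$ and $\cW(\cD)$ is an integer. So no case distinction on $b(\cD)$ is needed there.
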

\begin{proof}
We  define two sets
\begin{center}
$\cM:=\cD \setminus (\Inj_{n,\Omega}\cup \Sur_{n,\Omega})$, and  $\cM(\sigma):=\sigma \cD \setminus (\Inj_{n,\sigma\Omega}\cup \Sur_{n,\sigma\Omega})$.
\end{center}

If  $\cW(\cD)\leq \frac{b(\cD)+1}{2}$, then we have $\cW(\cD)= \frac{b(\cD)+1}{2}$ by Theorem \ref{D}. Hence we get $\cW(\cD)=\cW(\sigma\cD)=\frac{b(\cD)+1}{2}$. That is to say, there are $b(\cD)+1$ quasi-simple modules contained in the set $\cM \cup \cM(\sigma)$. However, there exist at least one sink  module and one source  module that are quasi-simple in the set $\cM \cup \cM(\sigma)$ by Lemma \ref{sink length} and Corollary \ref{source length}. Hence there are at least $b(\cD)+2$ quasi-simple modules contained in the set $\cM \cup \cM(\sigma)$, and this is a contradiction.

\end{proof}

We  now give an example with $b(\cD)=0$.
\begin{example}

 Let $M\in\modd(T(3),\Omega)$ be the module
\begin{center}

\begin{tikzpicture}
\node (00) at (0,0) {$0$};
\node (10) at (1,0) {$k$};
\node (20) at (2,0) {$k$};
\node (30) at (3,0) {$k$};
\node (40) at (4,0) {$k$};
\node (50) at (5,0) {$k$};
\node (60) at (6,0) {$0$ ,};
\node (1-1) at (1,-1) {$0$};
\node (2-1) at (2,-1) {$k$};
\node (3-1) at (3,-1) {$k$};
\node (4-1) at (4,-1) {$k$};
\node (5-1) at (5,-1) {$0$};
\node (2-2) at (2,-2) {$k$};
\node (4-2) at (4,-2) {$k$};
\path [->] (00) edge (10)
           (20) edge node[above]{$\lambda_1$}(10) 
           (1-1) edge (10)
          (20) edge node[above]{$\lambda_2$} (30)
            (40) edge node[above]{$\lambda_3$} (30)               
             (3-1) edge node[midway, right]{$\lambda_6$} (30)              
               (20) edge node[midway, right]{$\lambda_5$} (2-1)            
                (40) edge node[above]{$\lambda_4$} (50)
                (40) edge node[midway, right]{$\lambda_7$} (4-1)
                (60) edge (50)
                (5-1) edge (50)
              (3-1) edge node[midway, above]{$\lambda_8$} (2-2)  
                (3-1) edge   node[midway, left]{$\lambda_9$}   (4-2)
                          
                           ;

\end{tikzpicture}

\end{center}
where $\lambda_i\in k\setminus \{0\}, i\in \{1,2,\cdots,9\}$.
Then $M$ is indecomposable and $M\in\Inj_{3,\Omega}$ \cite[Proposition 1]{Claus5}. Actually,  $M$ is  quasi-simple   \cite[Proposition 3.4]{Bo}.  We can see that $M$ is regular since $\pi_\lambda(M)$ is regular. Furthermore, we have $b(M)=0$ and $M=M_{\cD}$ for some regular component $\cD$ of $\modd(T(3),\Omega)$. It is easy to see that  $\sigma^3 M, \sigma^4 M$ are  source modules and  maps in them all are  surjective. Since   $\sigma^2 M\nin\Sur_{3,\Omega}$, we obtain $\sigma^4 M=\tau^2 M=W_{\cD}$ and $\cW(\cD)=1$. Moreover, $\sigma M\nin\Inj_{3,\sigma\Omega}\cup\Sur_{3,\sigma\Omega}$, whence $\cW(\sigma \cD)=1$ by Lemma \ref{M}.

\end{example}

\begin{proposition}
 $\{\cW(\cD)|\cD\in \cR\}=\mathbb{N}$.
\end{proposition}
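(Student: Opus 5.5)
We must show two inclusions: every width is a positive integer, and every positive integer occurs as a width.

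For the first inclusion, Theorem \ref{D} gives $\cW(\cD)\geq \frac{b(\cD)+1}{2}\geq \frac12$. Since $\cW(\cD)$ is an integer, $\cW(\cD)\geq 1$. Hence $\{\cW(\cD)\mid \cD\in\cR\}\subseteq\mathbb{N}$.

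For the reverse inclusion, the plan is to produce, for each $m\geq 1$, a regular component of width exactly $m$. The example above gives a component $\cD_1$ with $\cW(\cD_1)=1$. I would build on it with $\tau$-shifts inside a fixed $\sigma$-orbit of components, using the formula from the proof of Theorem \ref{D}:
\[
\cW(\cD)=\frac{\iota(W_{\cD})-\iota(M_{\cD})}{2}-1 .
\]
This suggests choosing modules with controlled flow behaviour. By Theorem \ref{D}, $\cW(\cD)\geq \frac{b(\cD)+1}{2}$, so if we find, for each $m$, a regular component $\cD$ with $b(\cD)=2m-1$, then $\cW(\cD)\geq m$. This alone gives unboundedness, not exact values.

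To get exact values, I would argue via Lemmas \ref{MD} and \ref{WD}. These say that passing from $\cD$ to $\sigma\cD$ changes $\iota(M)$ and $\iota(W)$ by $\pm1$ each, so by (\ref{1}) the width changes by at most one. That is not enough by itself, because widths along a $\sigma$-orbit of components are eventually periodic rather than spreading out. So the key step is an explicit construction. For each $m$, take a complete sink module $M$ in $\Inj_{n,\Omega}$, regular and quasi-simple, generalizing the example with $b=0$. Choose its support tree so that $\sigma^{j}M$ stays non-surjective for $j\leq 2m$ and becomes a source module in $\Sur$ exactly at $j=2m+2$.

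A natural candidate is the module with support the full ball of radius $r$ around a source (or an edge) in $T(n)$, all spaces $k$ except for suitable dimension adjustments, with generic maps. Ringel's description in \cite[3.1]{Claus} says that $\sigma$ on complete sink modules shrinks the diameter by $2$ while preserving the center. So $\sigma^j M$ is computable, and the first index at which all maps become surjective can be read off from $r$, roughly $\iota(W)-\iota(M)\approx 2r$. Once $\sigma^{-1}M$ is checked not to lie in $\Inj$ (so that $M=M_{\cD}$) and $\sigma^{2m}M\notin\Sur$ while $\sigma^{2m+2}M\in\Sur$, we get $W_{\cD}=\tau^{m+1}M$ and $\cW(\cD)=m$. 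Quasi-simplicity would be checked via \cite[Proposition 3.4]{Bo}, and regularity via the dimension vector of $\pi_\lambda$.

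The main obstacle is this explicit computation: tracking the dimension vectors of $\sigma^jM$ across the radius-dependent family, and proving that surjectivity fails up to exactly the predicted step. I would first try to prove it by induction on $r$, comparing $\sigma^j M_r$ with $M_{r-1}$ restricted to the inner ball, mirroring the $b(\cD)=0$ example.
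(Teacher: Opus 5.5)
Your first inclusion is the paper's argument: Theorem~\ref{D} gives $\cW(\cD)\geq\frac{b(\cD)+1}{2}>0$, and $\cW(\cD)$ is an integer. The gap is the reverse inclusion $\mathbb{N}\subseteq\{\cW(\cD)\mid \cD\in\cR\}$, which is all of the remaining content. You do not prove it. You end with a construction whose key computation you leave open yourself. The paper constructs nothing here; it invokes \cite[Theorem 10.3$(b)$]{Daniel}, where components of every prescribed width are produced. Without that result, or a completed construction, your argument gives only $\{\cW(\cD)\}\subseteq\mathbb{N}$, plus the fact that $1$ is attained.

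The sketched construction also rests on a false heuristic. The rule $d(\sigma M)=d(M)-2$ from \cite[3.1]{Claus} holds for \emph{complete} sink modules, and completeness is typically lost after one shift. So the radius of $M$ does not control $\iota(W_{\cD})-\iota(M_{\cD})$. Take the thin module $M$ (all spaces $k$, nonzero maps) on the full ball of odd radius $r\geq 3$ around a source $c$.
\begin{itemize}
\item $\sigma M$ has dimensions $1,n-1,1,\dots,n-1,1$ at distances $0,\dots,r-1$ from $c$, so it is incomplete.
\item $\sigma^2M$ is a source module of radius $r-1$, like $\sigma M$, with $\dim(\sigma^2M)_c=n^2-n-1$. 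Hence $\tau M=\sigma^2M\notin\Inj_{n,\Omega}$.
\item A direct check gives $\sigma^3M\in\Sur_{n,\sigma\Omega}$. At a vertex $y$ at distance $r-2$, the images from the $n-1\geq 2$ outer neighbours are distinct hyperplanes of $(\sigma^2M)_y$; elsewhere the relevant maps are already surjective. By Lemma~\ref{N}, $\tau^2M\in\Sur_{n,\Omega}$.
\end{itemize}
Since $\cW\geq 1$, the cone description then forces $M=M_{\cD}$ and $\cW(\cD)=1$ for every $r$.

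As the proof of Theorem~\ref{D} shows, what must be controlled is the stretch of the $\sigma$-orbit between $\sigma M_{\cD}$ and $\sigma^-W_{\cD}$. That stretch consists of sink modules not in $\Inj$, the $b(\cD)$ flow modules, and source modules not in $\Sur$. Your unspecified dimension adjustments give no handle on it.

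Two smaller errors:
\begin{itemize}
\item Your test for $M=M_{\cD}$ cannot work. By Lemma~\ref{M}, $\sigma^{-1}M\in\Inj_{n,\sigma\Omega}$ always holds, and $\sigma^{-1}M$ lies in $\sigma\cD$, not $\cD$. The right test is that $M$ is quasi-simple and $\tau M=\sigma^2M\notin\Inj_{n,\Omega}$.
\item A ball centred at an edge gives a flow module, which by Lemma~\ref{sink length} is never in $\Inj_{n,\Omega}$.
\end{itemize}
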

\begin{proof}
According to Theorem \ref{D}, we get $\cW(\cD)\geq \frac{b(\cD)+1}{2} >0$ for any $\cD\in \cR$. Then $\{\cW(\cD)| \cD\in \cR\}\subseteq \mathbb{N}$. However, we already have $\mathbb{N}\subseteq \{\cW(\cD)| \cD\in \cR\}$ by \cite[Theorem 10.3$(b)$]{Daniel}. Hence $\{\cW(\cD)|\cD\in \cR\}=\mathbb{N}$.

\end{proof}

\begin{Remark}
If there existed a regular component $\cD$ of $\modd(T(n),\Omega)$ such that $\cW(\cD)=0$, then $\tau M_{\cD}=W_{\cD}=\sigma^2M_{\cD}$. Hence the module $\sigma M_{\cD}$ would be a sink module and the module $\sigma^-W_{\cD}\cong \sigma M_{\cD}$ would be a source module by the Lemma \ref{sink length}$(b)$ and Corollary \ref{source length}$(b)$. Apparently, this is a contradiction. 

\end{Remark}

\clearpage

%%%%%%%%%%%%%%%%%%%%%%% REFERENCES %%%%%%%%%%%%%%%%%%%


\begin{bibdiv}
\begin{biblist}
\addcontentsline{toc}{chapter}{\textbf{Bibliography}}
\bib{Julia}{article}{
title = {Categories of modules for elementary abelian p-groups and generalized Beilinson algebras},
author = {J. Worch},
journal = {J. London Math. Soc.},
date = {2013},
volume = {88},
pages = {649-688},
}


\bib{Luis}{article}{
   author={L. \'{A}lvarez-C\'{o}nsul},
   author={A. King},
   title={Moduli of sheaves from moduli of Kronecker modules},
   conference={
      title={Moduli spaces and vector bundles},
   },
   book={
      series={London Math. Soc. Lecture Note Ser.},
      volume={359},
      publisher={Cambridge Univ. Press, Cambridge},
   },
   isbn={978-0-521-73471-4},
   date={2009},
   pages={212--228},
  }
\bib{Claus4}{article}{

 AUTHOR = {C. Ringel},
     TITLE = {Quiver {G}rassmannians for wild acyclic quivers},
   JOURNAL = {Proc. Amer. Math. Soc.},
  FJOURNAL = {Proceedings of the American Mathematical Society},
    VOLUME = {146},
      YEAR = {2018},
    NUMBER = {5},
     PAGES = {1873--1877},
      ISSN = {0002-9939,1088-6826},
   MRCLASS = {16G20 (14D20 16G60)},
  MRNUMBER = {3767342},
MRREVIEWER = {Thorsten\ Weist},
        }

\bib{Assem2}{book}{
title={Elements of the representation Theory of Associative Algebras, \Romannum{3}},
subtitle={Representation-infinite Tilted Algebras},
series={London Math. Soc. Student Tex.},
author={D. Simson},
author={A. Skowro\'nski},
publisher={Cambridge Univ. Press},
date={2007},
address={Cambridge},
}

\bib{Kerner}{article}{
title={Representations of Wild Quivers Representation theory of algebras and related topics},
author={Kerner, O.},
journal={CMS
Conf. Proc.},
volume={19},
date={1996},
pages={65-107},

}


\bib{Claus}{article}{
title = {The shift orbits of the graded Kronecker modules},
author = {C. Ringel},

journal = {Math. Z.},
volume = {290},
date = {2018},
pages = {1199-1222},
number = {3},
}




\bib{Claus3}{webpage}{

title={Covering Theory}
author = {C. Ringel},
url={https://www.math.uni-bielefeld.de/~ringel/lectures/izmir/izmir-6.pdf}
}


\bib{Bongartz}{article}{
title={Covering spaces in representation theory},
author={K. Bongartz and P. Gabriel},
journal={Invent. Math. },
volume={65},
date={1981/82},

pages={331-378},
}
\bib{Claus1}{article}{
title = {Finite-dimensional hereditary algebras of wild representation type},
author = {C. Ringel},

journal = {Math. Z.},
volume = {161},
date = {1978},
pages = {235-255},

}
\bib{Assem}{book}{
title={Elements of the representation Theory of Associative Algebras, \Romannum{1}},
subtitle={Techniques of Representation Theory},
series={London Math. Soc. Student Tex.},
author={I. Assem},
author={D. Simson},
author={A. Skowro\'nski},
publisher={Cambridge Univ.  Press},
date={2007},
address={Cambridge},
}



\bib{Daniel}{thesis}{
title = {Representations of Regular Trees and Invariants of AR-Components for Generalized Kronecker Quivers},
author = {D.  Bissinger},
date = {2018},
school={Mathematisch-Naturwissenschaftliche
Fakultät, Christian-Albrechts-Universität zu Kiel},
type={PhD thesis},
url = {https://macau.uni-kiel.de/servlets/MCRFileNodeServlet/dissertation_derivate_00007342/DissertationDanielB.pdf}
}

























\bib{Claus5}{webpage}{

title={Simple representations, thin representations}
author = {C. Ringel},
url={https://www.math.uni-bielefeld.de/~sek/kau/leit2v2.pdf}
}





\bib{Bo}{article}{
  title = {Dimension vectors in regular components over wild Kronecker quivers},
  author = {B. Chen},
  journal = {Bull. Sci. Math.},
  volume = {137},
  number = {6},
  pages = {730-745},
  year = {2013},
  publisher = {Elsevier},
 } 

\bib{Zhang}{article}{
  title = {The modules in any component of the $AR$--quiver of a wild hereditary Artin algebra are uniquely determined by their composition factors},
  author = {B. Zhang},
  journal = {Acta. Math. Sin.},
  volume = {6},
pages = {97-99},
  year = {1990},
 
 } 




\bib{Jie}{thesis}{
title = {Modules of  generalized Kronecker quivers},
author = {J. Liu},
date = {2021},
school={Mathematisch-Naturwissenschaftliche
Fakultät, Christian-Albrechts-Universität zu Kiel},
type={PhD thesis},
url = {https://macau.uni-kiel.de/servlets/MCRFileNodeServlet/macau_derivate_00002730/Jie_Liu.pdf}
}

















\end{biblist}
\end{bibdiv}
\end{document}